\title[Explicit $7$-torsion in the Tate-Shafarevich groups of genus $2$ Jacobians]{Explicit $7$-torsion in the Tate--Shafarevich\\ groups of genus $2$ Jacobians}
\author{Sam Frengley} 
\address{School of Mathematics, University of Bristol, Bristol, BS8 1UG, UK}
\email{sam.frengley@bristol.ac.uk}
\date{23 September 2025}
\def\MR#1{\href{http://www.ams.org/mathscinet-getitem?mr=#1}{MR#1}}
\renewcommand{\arraystretch}{1.5}
\DeclareFontFamily{U}{wncy}{}
\DeclareFontShape{U}{wncy}{m}{n}{<->wncyr10}{}
\DeclareSymbolFont{mcy}{U}{wncy}{m}{n}
\DeclareMathSymbol{\Sha}{\mathord}{mcy}{"58} 
\newcommand{\bbF}{\mathbb{F}}
\newcommand{\bbP}{\mathbb{P}}
\newcommand{\bbQ}{\mathbb{Q}}
\newcommand{\bbZ}{\mathbb{Z}}
\newcommand{\ffp}{\mathfrak{p}}
\newcommand{\overbar}[1]{\mkern 1.5mu\overline{\mkern-1.5mu#1\mkern-1.5mu}\mkern 1.5mu}
\newcommand{\Kbar}{\overbar{K}}
\newcommand{\Qbar}{\overbar{\mathbb{Q}}}
\newcommand{\LMFDBLabel}[1]{\textnormal{\href{https://www.lmfdb.org/EllipticCurve/Q/#1/}{\texttt{#1}}}}
\newcommand{\LMFDBLabelGenusTwo}[1]{\textnormal{\href{https://www.lmfdb.org/Genus2Curve/Q/#1}{\texttt{#1}}}}
\DeclareMathOperator{\Gal}{Gal}
\DeclareMathOperator{\End}{End}
\DeclareMathOperator{\Aut}{Aut}
\DeclareMathOperator{\GL}{GL}
\DeclareMathOperator{\SL}{SL}
\DeclareMathOperator{\PSL}{PSL}
\DeclareMathOperator{\PGL}{PGL}
\DeclareMathOperator{\Jac}{Jac}
\newcommand{\eniii}{\textnormal{(\roman*)}}
\newtheorem{theorem}[algorithm]{Theorem}
\newtheorem{prop}[algorithm]{Proposition}
\newtheorem{lemma}[algorithm]{Lemma}
\newtheorem{conj}[algorithm]{Conjecture}
\newtheorem*{theorem*}{Theorem}
\newtheorem*{lemma*}{Lemma}
\newtheorem*{prop*}{Proposition}
\newtheorem*{coro*}{Corollary}  
\theoremstyle{definition}\newtheorem{defn}[algorithm]{Definition}
\theoremstyle{definition}\newtheorem{remark}[algorithm]{Remark}
\theoremstyle{definition}
\theoremstyle{definition}
\theoremstyle{definition}
\theoremstyle{definition}
\theoremstyle{definition}\newtheorem*{defn*}{Definition}  
\newcommand{\UpsN}[1]{\Gamma\!_{#1}}
\DeclareMathOperator{\Nm}{Nm}
\begin{document}

\begin{abstract}
  Let $C/\bbQ$ be a genus $2$ curve whose Jacobian $J/\bbQ$ has real multiplication by a quadratic order in which $7$ splits. We describe an algorithm which outputs twists of the Klein quartic curve which parametrise elliptic curves whose mod $7$ Galois representations are isomorphic to a sub-representation of the mod $7$ Galois representation attached to $J/\bbQ$. Applying this algorithm to genus $2$ curves of small conductor in families of Bending and Elkies--Kumar we exhibit a number of genus $2$ Jacobians whose Tate--Shafarevich groups (unconditionally) contain a non-trivial element of order $7$ which is visible in an abelian three-fold.
\end{abstract}

\maketitle

\section{Introduction}
\newcommand{\condbound}{500\,000}
Let $K$ be a number field and let $A/K$ be an abelian variety. For each place $v$ of $K$ we denote the completion of $K$ at $v$ by $K_v$. We write $G_K = \Gal(\Kbar/K)$ for the absolute Galois group of $K$ and write $G_v = \Gal(\overbar{K_v}/K_v)$. The Tate--Shafarevich group of $A/K$ is the group
\begin{equation*}
  \Sha(A/K) = \ker \left( H^1(G_K, A) \to  \prod_v H^1(G_{v}, A) \right)
\end{equation*}
where $v$ ranges over places of $K$. The non-trivial elements of the group $\Sha(A/K)$ parametrise torsors for $A/K$ which have $K_v$-rational points for every place $v$, but no $K$-points. In this article we prove the following theorem.

\begin{theorem}
  \label{coro:lots-of-sha}
  If $C/\bbQ$ is one of the genus $2$ curves in \Cref{tab:sha-exs}, then the Jacobian $J = \Jac(C)$ of $C$ is absolutely simple (i.e., $J$ is not isogenous over $\Qbar$ to a product of elliptic curves), has conductor at most $(500\,000)^2$, and the Tate--Shafarevich group $\Sha(J/\bbQ)$ contains a subgroup isomorphic to $(\bbZ/7\bbZ)^2$.
\end{theorem}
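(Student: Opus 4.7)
The strategy is to apply the algorithm described in the paper to each curve $C$ in \Cref{tab:sha-exs} and combine its output with visibility theory. Since $J=\Jac(C)$ has real multiplication by a quadratic order $\mathcal{O}$ in which $7=\mathfrak{p}\overbar{\mathfrak{p}}$ splits, the $\mathcal{O}$-module structure on $J[7]$ yields a Galois-equivariant decomposition $J[7]=J[\mathfrak{p}]\oplus J[\overbar{\mathfrak{p}}]$ as a direct sum of two $2$-dimensional $\bbF_7[G_\bbQ]$-modules. Running the algorithm for each of $\mathfrak{p}$ and $\overbar{\mathfrak{p}}$ produces two twists of the Klein quartic, and searching for $\bbQ$-rational points on each twist should yield a pair of elliptic curves $E_1,E_2/\bbQ$ together with Galois-equivariant isomorphisms $E_i[7]\cong J[\mathfrak{p}_i]$, where $\mathfrak{p}_1=\mathfrak{p}$ and $\mathfrak{p}_2=\overbar{\mathfrak{p}}$.

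From each isomorphism $E_i[7]\xrightarrow{\sim} J[\mathfrak{p}_i]$ one builds an abelian threefold $A_i=(J\times E_i)/\Gamma_i$, where $\Gamma_i$ is the antidiagonal copy of $E_i[7]$, together with an embedding $J\hookrightarrow A_i$ whose quotient is isogenous to $E_i$. Following the visibility constructions of Cremona--Mazur and Agashe--Stein, this embedding induces a map $E_i(\bbQ)/7E_i(\bbQ)\to \Sha(J/\bbQ)[\mathfrak{p}_i]$ arising from the long exact sequence associated to $0\to J\to A_i\to E_i\to 0$. Provided $E_i$ has positive Mordell--Weil rank and the image of a generator is non-trivial (a condition that reduces to comparing local conditions at the primes of bad reduction of $J$ and $E_i$), this yields a non-zero subgroup of $\Sha(J/\bbQ)[\mathfrak{p}_i]$. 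Because the action of $\mathcal{O}$ on $\Sha(J/\bbQ)[7]$ induces a decomposition $\Sha(J/\bbQ)[\mathfrak{p}]\oplus \Sha(J/\bbQ)[\overbar{\mathfrak{p}}]$, the two contributions intersect trivially and together realise the claimed copy of $(\bbZ/7\bbZ)^2$.

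The conductor bound follows from a direct computation of the Euler factors of $J$ at each bad prime. Absolute simplicity of $J$ is checked by inspecting the characteristic polynomials of Frobenius at a few small primes of good reduction and observing that the resulting Weil numbers are incompatible with any $\Qbar$-isogeny decomposition of $J$ into a product of elliptic curves. The main obstacle is the first step: each output of the algorithm is a smooth plane quartic of geometric genus $3$, and a systematic search (using height estimates and local obstructions to cut down the search region) is required to find a rational point which produces an elliptic curve of positive Mordell--Weil rank over $\bbQ$. Once the curves $E_1$ and $E_2$ have been located, verification of their ranks and of the non-triviality of the resulting classes in $\Sha(J/\bbQ)$ is an explicit finite calculation, which is to be carried out for each row of \Cref{tab:sha-exs}.
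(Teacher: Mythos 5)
Your proposal diverges from the paper in a significant way and contains a substantive gap.

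\textbf{The eigenspace decomposition is the wrong route.} You propose finding \emph{two} elliptic curves $E_1, E_2/\bbQ$, one $(7,\ffp)$-congruent and one $(7,\overbar{\ffp})$-congruent to $J$, each contributing a copy of $\bbZ/7\bbZ$ to the respective eigenspace $\Sha(J/\bbQ)[\ffp_i]$, and then summing the contributions. The paper instead works entirely inside a single eigenspace: one finds \emph{one} elliptic curve $E$ that is $(7,\ffp)$-congruent to $J$ with $\operatorname{rk} E(\bbQ) = 2$ and $\operatorname{rk} J(\bbQ) = 0$, so that $E(\bbQ)/7E(\bbQ) \cong (\bbZ/7\bbZ)^2$ is already two-dimensional. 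By \cite[Theorem~2.2]{F_VEOO7ITTSGOAEC} this whole group is visible in $Z = (J\times E)/\Delta$, and $\operatorname{Vis}_Z\Sha(J/\bbQ) \cong E(\bbQ)/7E(\bbQ) \cong (\bbZ/7\bbZ)^2$ lands inside the $\ffp$-part alone. This is both simpler and, for the curves in \Cref{tab:sha-exs}, necessary: the table records a single elliptic curve $E$ per row, and there is no reason the $\overbar{\ffp}$-twist should carry a rational point coming from a curve of positive rank. Your approach would require rational points of the right kind on \emph{both} $X_{J[\ffp]}^{\pm}(7)$ and $X_{J[\overbar{\ffp}]}^{\pm}(7)$, a much stronger condition that is not verified and is likely false for most rows.

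\textbf{A second gap: you skip the ``proving the congruence'' step.} The algorithm of \Cref{sec:algo-twist-X7} produces only $\ffq$-adic approximations to the twists, which are then recognised as elements of $\bbQ$ via LLL. Finding a rational point on such a curve therefore gives only a \emph{putative} congruence. The paper spends two sections turning this into a proof: \Cref{sec:proving-twists} shows (via the Goursat/Serre argument packaged in \Cref{lemma:qt-isom}) that the output curve really is $X_{J[\ffp]}^{\pm}(7)$ and hence that $E[7]$ and $J[\ffp]$ are isomorphic \emph{up to quadratic twist}, and \Cref{sec:proving-congs} (\Cref{lemma:where-qt} plus the trace-of-Frobenius criterion~\eqref{eq:trace-cond}) rules out every non-trivial quadratic twist. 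Your sketch treats ``find a rational point'' as if it already furnishes a Galois-module isomorphism $E[7]\cong J[\ffp]$; this is exactly the part that needs, and in the paper receives, a rigorous argument. Finally, you should name the hypotheses you are invoking: the visibility input is that $B(\bbQ)/\ffp J(\bbQ) = 0$ together with coprimality of the Tamagawa numbers of $E$ and $J$ to $7$ and good reduction of both at $7$ -- these are non-trivial checks that the paper carries out curve by curve (including one case that requires the regular-model computation of \cite{KS_CVOSBSDFMMASOQ}).
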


\begin{remark}
  The genus $2$ Jacobian $J/\bbQ$ of conductor $3200^2$ in \Cref{tab:sha-exs} was included in an appendix to \cite{KS_CVOSBSDFMMASOQ} joint with Keller and Stoll where the strong Birch and Swinnerton-Dyer conjecture is also verified for $J$.
\end{remark}

\begingroup
\renewcommand*{\arraystretch}{1.2}
\begin{table}[t]
  \centering
  \begin{tabular}{c >{\centering\arraybackslash}p{9.8cm} c c}
    $D$  & $f(x)$                                                                         & $\sqrt{N_J}$      & $E$                     \\
    \hline
    $8$  & $-10( x^{6} -4 x^{5} -3 x^{4} +8 x^{3} +25 x^{2} +20 x +5)$                    & $\texttt{3200}$   & \LMFDBLabel{3200.a1}    \\
    $8$  & $165( x^{6} +6 x^{5} +27 x^{4} -2 x^{3} +45 x^{2} +20)$                        & \texttt{39325}    & \LMFDBLabel{39325.c1}   \\
    $37$ & $-13( 27 x^6 - 54 x^5 - 90 x^4 + 228 x^3 + 15 x^2 - 90 x - 23 )$              & \texttt{73008}    & \LMFDBLabel{73008.n1}   \\
    $8$  & $-51( x^{6} +6 x^{5} +27 x^{4} -2 x^{3} +45 x^{2} +20)$                        & \texttt{93925}    & \LMFDBLabel{93925.d1}   \\
    $8$  & $285( x^{6} +6 x^{5} +27 x^{4} -2 x^{3} +45 x^{2} +20)$                        & \texttt{117325}   & \LMFDBLabel{117325.c1}  \\
    $8$  & $-62(9 x^{6} -12 x^{5} +64 x^{4} -56 x^{3} +136 x^{2} -60 x +84)$              & $\texttt{184512}$ & \LMFDBLabel{184512.bw1} \\
    $8$  & $-46( x^{6} +6 x^{5} -20 x^{4} +240 x^{3} +70 x^{2} -84 x +12)$                & $\texttt{203136}$ & \LMFDBLabel{203136.i2}  \\
    $8$  & $-5(3 x^{6} +12 x^{5} +89 x^{4} -56 x^{3} -7 x^{2} -132 x +99)$                & $\texttt{211200}$ & \LMFDBLabel{211200.c1}  \\
    $8$  & $465(9 x^{6} -12 x^{5} +64 x^{4} -56 x^{3} +136 x^{2} -60 x +84)$              & \texttt{216225}   & \LMFDBLabel{432450.ci1} \\
    $8$  & $-30(11 x^{6} -18 x^{5} +47 x^{4} +6 x^{3} +71 x^{2} +18 x +27)$               & $\texttt{244800}$ & \LMFDBLabel{244800.dc1} \\
    $8$  & $-13( x^{6} -2 x^{5} +3 x^{4}  x^{3} -7 x^{2} -2 x +1)$                        & $\texttt{256880}$ & \LMFDBLabel{51376.e1}   \\
    $8$  & $-390( x^{6} +6 x^{5} +27 x^{4} -2 x^{3} +45 x^{2} +20)$                       & $\texttt{270400}$ & \LMFDBLabel{270400.dc2} \\
    $8$  & $-177(3 x^{6} +12 x^{4} -10 x^{3} -12 x +11)$                                  & \texttt{281961}   & \texttt{2819610*}       \\
    $8$  & $-22( x^{6} -24 x^{5} +100 x^{4} +102 x^{3} -80 x^{2} -132 x -39)$             & $\texttt{302016}$ & \LMFDBLabel{302016.p1}  \\
    $8$  & $-6(13 x^{6} -116 x^{5} -316 x^{4} +58 x^{3} +264 x^{2} -116 x +13)$           & $\texttt{313920}$ & \LMFDBLabel{313920.bb1} \\
    $8$  & $-55(13 x^{6} -116 x^{5} -316 x^{4} +58 x^{3} +264 x^{2} -116 x +13)$          & \texttt{329725}   & \texttt{659450*}        \\
    $8$  & $-110( x^{6} -4 x^{5} -3 x^{4} +8 x^{3} +25 x^{2} +20 x +5)$                   & $\texttt{387200}$ & \texttt{8905600*}       \\
    $8$  & $11( x^{6} +6 x^{5} +11 x^{4} -13 x^{2} +6 x -2)$                              & $\texttt{423984}$ & \LMFDBLabel{423984.by1} \\
  \end{tabular}
  \caption{Examples of genus $2$ curves $C \colon y^2 = f(x)$ whose Jacobians $J/\bbQ$ have conductor $N_J < (500\,000)^2$, and such that $\Sha(J/\bbQ)$ contains a subgroup isomorphic to $(\bbZ/7\bbZ)^2$ (we do not claim, nor expect, this list to be complete). The Jacobians have real multiplication by the quadratic order $\mathcal{O}_D$ of discriminant $D$ and the subgroup of $\Sha(J/\bbQ)$ is made visible by a $(7, \ffp)$-congruence between $E/\bbQ$ and $J/\bbQ$ where $\ffp$ divides $7$ in $\mathcal{O}_D$. We write $N*$ for an elliptic curve of conductor $N$ which does not appear in the LMFDB (i.e., if $N > 500\,000$). Explicit Weierstrass equations for the corresponding elliptic curves are given in \cite{ME_ELECTRONIC_here}. The curves $C/\bbQ$ were generated from \cite[Theorem~4.1]{B_COG2W2M} and \cite{EK_K3SAEFHMS,CFM_FMFG2CWRM}. Conductors were computed using~\cite{DD_3TACOG2C}.}
  \label{tab:sha-exs}
\end{table}
\endgroup

The group $\Sha(A/K)$ is a torsion group and is conjectured to be finite. It is conjectured that for every prime number $p > 0$ and each integer $g > 0$ there exists an absolutely simple abelian variety (i.e., one which is not isogenous over $\Qbar$ to a product) of dimension $g$ for which $\Sha(A/\bbQ)[p] \neq 0$. Indeed, it is even conjectured that $\Sha(A^d/\bbQ)[p] \neq 0$ for a positive proportion of quadratic twists of a \emph{fixed} abelian variety $A/\bbQ$ (see e.g., \cite[Conjecture~1.1]{BKOS_EOGOITSGOAVIQTF}). 

In spite of this, for general values of $g$ and $p$, constructing an example of an absolutely simple $g$-dimensional abelian variety $A/\bbQ$ with an $p$-torsion element contained in $\Sha(A/\bbQ)$ is an open problem. By allowing the dimension of $A$ to increase with $p$, Shnidman and Weiss~\cite{SW_EOPOITSGOAVOQ} construct examples of absolutely simple abelian varieties with $\Sha(A/\bbQ)[p] \neq 0$. Flynn and Shnidman~\cite{FS_ALpTITSG} extended this result to show $\Sha(A/\bbQ)[p]$ can be arbitrarily large.

When $A$ has dimension $2$, Bruin, Flynn, and Testa~\cite{BFT_DV33IOJOG2C,F_DV55IOJOG2C} found examples of absolutely simple genus $2$ Jacobians with $3$ and $5$-torsion in their Tate--Shafarevich groups. Their approach relies on \emph{$(p,p)$-descent}. That is, for several explicit examples of genus $2$ curves $C/\bbQ$ they determined the $\psi$-Selmer groups of their Jacobians $J/\bbQ$ where $\psi$ is a $(p,p)$-isogeny (i.e., a polarised isogeny with kernel isomorphic to $(\bbZ/p\bbZ)^2$) for each $p = 3, 5$.

However, performing a $(p,p)$-descent becomes computationally costly as $p$ increases, due to the need to perform class and unit group calculations in (a subfield of) the field $\bbQ(J[\psi])$. Our approach is to instead leverage \emph{visibility} (see e.g., \cite{CM_VEITSTG,AS_VOSTGOAV,AS_VEFTBSDCFMAVOAR0,F_VEOO7ITTSGOAEC}) to construct absolutely simple genus $2$ Jacobians such that $\Sha(J/\bbQ)[7] \neq 0$.

Let $K$ be a number field and let $A/K$ and $A'/K$ be abelian varieties equipped with isogenies $\psi \colon A \to B$ and $\psi' \colon A' \to B'$.

\begin{defn}
  We say that $A/K$ and $A'/K$ are \emph{$(\psi, \psi')$-congruent} if there exists a $G_K$-equivariant group isomorphism $\phi \colon A[\psi] \to A'[\psi']$. We say that $\phi$ is a \emph{$(\psi, \psi')$-congruence}.

  In this case let $\Delta = \operatorname{Graph} \phi \subset A \times A'$. An element of $\Sha(A/K)$ is said to be \emph{visible} in the abelian variety $Z = (A \times A')/\Delta$ if it is contained in the kernel of the induced homomorphism $H^1(G_K, A) \to H^1(G_K, Z)$.
\end{defn}

Visibility is useful for constructing elements of $\Sha(A/K)$ since it allows us to transport information between the Mordell--Weil group of $B'/K$ and the Tate--Shafarevich group of $A/K$.  More precisely, if $B(K)/\psi A(K) = 0$ then, under mild hypotheses applied at the bad primes of $A$ and $A'$ and the primes dividing $| \Delta |$ (see~\cite[Theorem~2.2]{F_VEOO7ITTSGOAEC}), the group $\operatorname{Vis}_Z \Sha(A/K)$ of elements of $\Sha(A/K)$ that are visible in $Z = (A \times A')/\Delta$ is equal to $B'(K)/\psi' A'(K)$.

The central idea for proving \Cref{coro:lots-of-sha} is to construct examples of genus $2$ curves $C/\bbQ$ with the property that there exists a $(7, \psi)$-congruence between an elliptic curve $E/\bbQ$ and the Jacobian $J = \Jac(C)$ of $C$, for some isogeny $\psi \colon J \to B$. Assuming that the local conditions are satisfied, it then suffices to show that $B(\bbQ)/\psi J(\bbQ) = 0$ and that $E(\bbQ)/7E(\bbQ) \neq 0$ (which in practice is computationally less intensive than performing a $\psi$-descent on $J$). This approach is a mirror to that taken by Fisher~\cite{F_VEOO7ITTSGOAEC} who used it to visualise elements of order $7$ in the Tate--Shafarevich groups of elliptic curves. The main technical contribution of this article is construct examples of elliptic curves which are $(7,\psi)$-congruent to a genus $2$ Jacobian.

We ensure the existence of such an isogeny $\psi$ by choosing $J/\bbQ$ to have real multiplication (RM) by a real quadratic order $\mathcal{O}_D$ of fundamental discriminant $D > 0$. Suppose that $7$ splits in $\mathcal{O}_D$ and we have an embedding $\mathcal{O}_D \subset \End_\bbQ(J)$. Writing $(7) = \ffp \bar{\ffp}$ in $\mathcal{O}_D$ by abuse of notation we write $\ffp \colon J \to B$ for the isogeny with kernel consisting of those $P \in J(\Qbar)$ annihilated by $\ffp$. In this case, $\ker \ffp$ is isomorphic as a group to $(\bbZ/7\bbZ)^2$ and under suitable hypotheses (see~\Cref{lemma:split}) comes equipped with a natural alternating bilinear pairing.

In \Cref{sec:algo-twist-X7} we describe an algorithm for determining (a $q$-adic approximation to) a pair of twists of the Klein quartic (the modular curve $X(7)$) which parametrise elliptic curves that are $(7, \ffp)$-congruent to a fixed genus $2$ Jacobian $J/\bbQ$ with real multiplication by $\mathcal{O}_D$ (our algorithm is subject to the technical hypothesis that $J[\ffp]$ is an irreducible $G_{\bbQ}$-module). 

\begin{remark}
  Since abelian surfaces $J/\bbQ$ with RM by $\mathcal{O}_D$ are modular (this follows from Serre's conjecture~\cite{KW_SMCI,KW_SMCII}) we may associate to $J$ a weight $2$ newform with coefficients in $\mathcal{O}_D$ and level $\sqrt{N_J}$, where $N_J$ is the conductor of $J$ (in particular $N_J$ is a perfect square).
\end{remark}

We compute these twists of $X(7)$ for examples of genus $2$ Jacobians of small conductor provided by the real multiplication families of Bending~\cite{B_COG2W2M_thesis,B_COG2W2M} and of Elkies--Kumar~\cite{EK_K3SAEFHMS,CFM_FMFG2CWRM}. By searching for rational points on these twists, we find a number of putative examples of $(7, \ffp)$-congruences between an elliptic curve $E/\bbQ$ and a genus $2$ Jacobian $J/\bbQ$. Adapting an approach of Fisher~\cite[Section~6]{F_VEOO7ITTSGOAEC} we prove these congruences in \Cref{thm:sqrt2-congs}.

For an abelian variety $A/\bbQ$ we write $A^d/\bbQ$ for the quadratic twist of $A$ by a squarefree integer $d \in \bbZ$. Note that simultaneous quadratic twists of $(7,\ffp)$-congruent pairs remain $(7,\ffp)$-congruent (cf.~\cite[Lemma~4.15]{FK_OTSTOIOTPTOEC}). To construct the examples in \Cref{coro:lots-of-sha} we simply search for quadratic twists of the examples in \Cref{thm:sqrt2-congs} where there is a rank discrepancy between $E^d/\bbQ$ and $J^d/\bbQ$.

In addition to \Cref{coro:lots-of-sha} we prove that there exist examples of such genus $2$ Jacobians with $7$-torsion in their Tate--Shafarevich groups and with real multiplication by $\mathcal{O}_D$ for several fundamental discriminants $D > 0$.

\begin{theorem}
  \label{thm:sha7-discs}
  For each $D = 8$, $29$, $37$, $44$, and $57$ there exists an absolutely simple genus $2$ Jacobian $J/\bbQ$ with real multiplication by $\mathcal{O}_D$ such that $\Sha(J/\bbQ)$ contains a subgroup isomorphic to $(\bbZ/7\bbZ)^2$. Examples are furnished by the Jacobians of the curves $C \colon y^2 = f(x)$ given in \Cref{tab:sha-exs-discs}.
\end{theorem}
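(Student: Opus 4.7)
The plan is to apply the visibility strategy used to establish \Cref{coro:lots-of-sha} separately for each $D \in \{8, 29, 37, 44, 57\}$. For each such $D$, I first verify that $7$ splits in $\mathcal{O}_D$, so that the prime $\ffp \mid 7$ is well-defined and the $(7, \ffp)$-congruence framework applies. I then select a genus $2$ curve $C/\bbQ$ whose Jacobian $J = \Jac(C)$ has RM by $\mathcal{O}_D$ from the Bending family \cite{B_COG2W2M} (for $D = 8$) or the Elkies--Kumar families \cite{EK_K3SAEFHMS,CFM_FMFG2CWRM} (for $D = 29, 37, 44, 57$), apply the algorithm of \Cref{sec:algo-twist-X7} to construct the pair of twists of the Klein quartic $X(7)$ parametrising elliptic curves $(7, \ffp)$-congruent to $J$, and search these twists for rational points of modest height. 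Each rational point found yields a candidate $(7, \ffp)$-congruence between $J$ and some elliptic curve $E/\bbQ$, which I confirm rigorously by the method of \Cref{thm:sqrt2-congs}.

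Next, for each confirmed congruent pair $(E, J)$, I search over squarefree integers $d$ for a simultaneous quadratic twist $(E^d, J^d)$ with the property that $E^d(\bbQ)/7E^d(\bbQ) \cong (\bbZ/7\bbZ)^2$ (so that $E^d$ has Mordell--Weil rank at least $2$, after accounting for any rational $7$-torsion) while simultaneously $B^d(\bbQ)/\ffp J^d(\bbQ) = 0$, where $\ffp \colon J \to B$ denotes the isogeny associated with the prime $\ffp$. Since quadratic twisting preserves $(7, \ffp)$-congruence (cf.~\cite[Lemma~4.15]{FK_OTSTOIOTPTOEC}), upon verifying the local hypotheses of \cite[Theorem~2.2]{F_VEOO7ITTSGOAEC} at the primes of bad reduction and at $7$, visibility produces a subgroup of $\Sha(J^d/\bbQ)$ isomorphic to $E^d(\bbQ)/7E^d(\bbQ) \cong (\bbZ/7\bbZ)^2$. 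Absolute simplicity of each $J^d$ is then established by computing the characteristic polynomial of Frobenius at a small prime of good reduction and observing that it is irreducible over $\bbQ$.

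The principal obstacle I anticipate is computational. For the larger discriminants, curves of small conductor in the Elkies--Kumar families are sparse, the defining equations of the $X(7)$ twists acquire coefficients of rapidly growing height, and the search over twists $d$ must simultaneously realise the desired rank inequality on $E^d/\bbQ$ and the vanishing of $B^d(\bbQ)/\ffp J^d(\bbQ)$. The $\ffp$-descent on each $J^d$, while substantially cheaper than a full $(7,7)$-descent, is the step which most stringently limits the attainable conductor range, and the table of examples ought to be populated by whatever data the search successfully returns for each of the five discriminants.
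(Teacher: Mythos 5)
Your overall strategy matches the paper's: construct a $(7,\ffp)$-congruent pair $(E,J)$ from rational points on the twists $X_{J[\ffp]}^\pm(7)$ produced by the algorithm, verify the congruence by the method of \Cref{thm:sqrt2-congs}, twist simultaneously by a squarefree $d$ so that $E^d$ has rank $\geq 2$ while $J^d$ has rank $0$, and apply \cite[Theorem~2.2]{F_VEOO7ITTSGOAEC}. Two details, however, go astray.

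First, your test for absolute simplicity is too weak. Irreducibility of the characteristic polynomial of Frobenius at a single good prime shows that $J$ is simple \emph{over $\bbQ$}, but not that $J$ is absolutely simple. A genus $2$ Jacobian can be simple over $\bbQ$ yet geometrically isogenous to a product of elliptic curves (e.g.\ a Weil restriction $\operatorname{Res}_{K/\bbQ} E$ for a quadratic field $K$, or a surface with potential CM); for such $J$, the Frobenius characteristic polynomial at many primes can still be irreducible over $\bbQ$. The paper instead appeals to the criterion in \cite[Section~14.4]{CF_PTAMAOCOG2} and \cite{S_TS2DAVDOQWMWGORAL19}, which involves more than a single irreducibility check.

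Second, you treat the verification of $B^d(\bbQ)/\ffp J^d(\bbQ) = 0$ as requiring a $\ffp$-descent and flag this as the dominant computational cost. In fact, the paper avoids any such descent. For every discriminant $D$ in \Cref{tab:sha-exs-discs}, the prime $7$ is not merely split in $\mathcal{O}_D$: one has $7 = \Nm\eta$ for some $\eta \in \mathcal{O}_D$, so $\ffp$ is principal and the isogeny $\ffp$ is the \emph{endomorphism} $[\eta]$ of $J$. Hence $B = J$, and $J^d(\bbQ)/\ffp J^d(\bbQ) = 0$ follows immediately once one checks that $J^d(\bbQ)$ has rank $0$ (by a standard $2$-descent, via \texttt{RankBounds} in \texttt{Magma}) and has trivial $7$-torsion. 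This observation replaces the $\ffp$-descent by a much cheaper $2$-descent and is what makes the examples computationally accessible. You should incorporate both of these points to close the gaps in the argument.
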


\begingroup
\renewcommand*{\arraystretch}{1.2}
\begin{table}[t]
  \centering
  \begin{tabular}{c >{\centering\arraybackslash}p{9.8cm} c c}
    $D$  & $f(x)$                                                           & $\sqrt{N_J}$       & $E$                   \\
    \hline
    $8$  & $-10( x^{6} -4 x^{5} -3 x^{4} +8 x^{3} +25 x^{2} +20 x +5)$      & \texttt{3200}      & \LMFDBLabel{3200.a1}  \\
    $29$ & $-2470 (8x^6 - 2x^5 + 68x^4 + 221x^3 + 122x^2 + 986x + 1588)$    & \texttt{40019200}  & \texttt{760364800*}   \\
    $37$ & $-39( x^{6} -45 x^{4} -68 x^{3} +504 x^{2} +180 x -1193)$        & \texttt{73008}     & \LMFDBLabel{73008.n1} \\
    $44$ & $-39 (14x^6 - 30x^5 + 85x^4 + 700x^3 - 1325x^2 + 3000x + 18000)$ & \texttt{608400}    & \texttt{608400*}      \\
    $57$ & $1479 (80x^6 + 279x^4 + 186x^3 +243x^2 + 324x +108)$             & \texttt{590609070} & \texttt{7677917910*}  \\
  \end{tabular}
  \caption{Examples of genus $2$ curves $C \colon y^2 = f(x)$ whose Jacobians $J/\bbQ$ have real multiplication by the quadratic order $\mathcal{O}_D$ of discriminant $D$ and such that $\Sha(J/\bbQ)$ contains a subgroup isomorphic to $(\bbZ/7\bbZ)^2$. The subgroup of $\Sha(J/\bbQ)$ is made visible by a $(7, \ffp)$-congruence between $E/\bbQ$ and $J/\bbQ$ where $\ffp$ divides $7$ in $\mathcal{O}_D$. We write $N*$ for an elliptic curve of conductor $N$ which does not appear in the LMFDB (i.e., if $N > 500\,000$). Explicit Weierstrass equations for the corresponding elliptic curves are given in \cite{ME_ELECTRONIC_here}. The curves $C/\bbQ$ were generated from \cite[Theorem~4.1]{B_COG2W2M} and \cite{EK_K3SAEFHMS,CFM_FMFG2CWRM}. Conductors were computed using~\cite{DD_3TACOG2C}.}
  \label{tab:sha-exs-discs}
\end{table}
\endgroup

In \cite[A.3]{KS_CVOSBSDFMMASOQ} it is observed that the Birch and Swinnerton--Dyer conjecture predicts that $\Sha(J^{-11}/\bbQ)$ contains a subgroup isomorphic to $(\bbZ/7\bbZ)^2$, where $J/\bbQ$ is the Jacobian of the genus $2$ curve with LMFDB label \LMFDBLabelGenusTwo{385641.a.385641.1}. Since $J$ has RM by $\mathcal{O}_8$ it is natural to ask whether the $7$-torsion in $\Sha(J^{-11}/\bbQ)$ is made visible by a $(7,\ffp)$-congruence with an elliptic curve. By computing the relevant twists of $X(7)$ we give evidence that this is not the case (see \Cref{sec:worthy-example}).

\begin{conj}
  \label{conj:sha-invisible}
  Let $C/\bbQ$ be the genus $2$ curve with LMFDB label \LMFDBLabelGenusTwo{385641.a.385641.1} and let $J/\bbQ$ be its Jacobian. There exists a subgroup isomorphic to $(\bbZ/7\bbZ)^2$ contained in $\Sha(J^{-11}/\bbQ)$ that is not visible in an abelian $3$-fold.
\end{conj}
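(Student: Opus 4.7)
The plan is to give computational evidence for the conjecture by applying the algorithm of \Cref{sec:algo-twist-X7} to exhibit the non-existence (up to a large-height search) of any elliptic curve $E/\bbQ$ that is $(7,\ffp)$- or $(7,\bar{\ffp})$-congruent to $J^{-11}$. Since $J$ has real multiplication by $\mathcal{O}_8$ and $7$ splits in $\mathcal{O}_8$ as $\ffp\bar{\ffp}$, the $7$-torsion of $J^{-11}$ decomposes as a $G_{\bbQ}$-module into $J^{-11}[\ffp] \oplus J^{-11}[\bar{\ffp}]$; under the irreducibility of $J^{-11}[\ffp]$ (the standing hypothesis of the algorithm, verifiable at a handful of Frobenii) the only sub-$G_{\bbQ}$-modules of $J^{-11}[7]$ of order $49$ are these two summands. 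Consequently, visibility of a $(\bbZ/7\bbZ)^2$-subgroup of $\Sha(J^{-11}/\bbQ)$ in an abelian $3$-fold $Z = (J^{-11} \times E)/\Delta$ would force $E/\bbQ$ to realise a $(7,\ffp)$- or $(7,\bar{\ffp})$-congruence with $J^{-11}$ (the case $\psi' = [7]$ on $E$ being the only possibility for dimension reasons).

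I would carry this out in three steps. First, I would run the algorithm of \Cref{sec:algo-twist-X7} on the data $(J^{-11}, \ffp)$ and $(J^{-11}, \bar{\ffp})$, obtaining $q$-adic approximations to the resulting twists of the Klein quartic $X(7) \subset \bbP^2$. Second, working to sufficient $q$-adic precision I would rationally reconstruct the coefficients of each twist over $\bbQ$ and certify correctness independently, for instance by comparing the reduction of the recovered quartic modulo small primes of good reduction against the expected traces of Frobenius on $J^{-11}[\ffp]$. Third, I would run a height-bounded search for $\bbQ$-points on each of the recovered Klein quartics and verify that the only points found are the cusps; this then constitutes the desired evidence that no $(7,\ffp)$- or $(7,\bar{\ffp})$-congruent elliptic curve exists, and hence that the $(\bbZ/7\bbZ)^2$ predicted by the Birch and Swinnerton-Dyer conjecture to sit inside $\Sha(J^{-11}/\bbQ)$ is not visible in any abelian $3$-fold.

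The main obstacle is that such evidence is inherently heuristic: the failure to locate a rational point on a plane quartic below a given height does not preclude the existence of one of larger height. Upgrading to an unconditional result would require either explicit descent on each of the four relevant twists of $X(7)$ — plane curves of genus $3$, for which Selmer-group machinery is considerably more delicate than in the elliptic case — or a full $\ffp$-descent on $J^{-11}/\bbQ$, which is precisely the computation whose cost motivated the visibility-based approach taken throughout this article. A secondary difficulty is the usual one that the algorithm outputs only $q$-adic approximations, so one must work to enough $q$-adic precision to recover exact equations over $\bbQ$, and then independently certify them, before a negative search can be regarded as genuine evidence for the conjecture.
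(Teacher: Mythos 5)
Your overall strategy matches the paper's: compute the four Klein‑quartic twists $X_{J[\ffp]}^{\pm}(7)$ and $X_{J[\bar\ffp]}^{\pm}(7)$ via the algorithm of Section~3, recognise them over $\bbQ$, verify them via \Cref{prop:XM-proof}, and then search for rational points. (A small organisational difference: the paper runs the algorithm on $J$ rather than $J^{-11}$. Since simultaneous quadratic twists preserve $(7,\ffp)$-congruences and $\pm I$ acts trivially on $X(7)$, these give the same plane quartics — only the moduli interpretation shifts by the twist, so elliptic curves $(7,\ffp)$-congruent to $J^{-11}$ are exactly the $(-11)$-twists of those parametrised by $X_{J[\ffp]}^{\pm}(7)$. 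This makes no substantive difference.)

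The genuine gap is in your third step. You propose to \emph{``verify that the only points found are the cusps''} and infer that no congruent elliptic curve exists. In fact the search does produce a non-cuspidal rational point: the paper finds exactly one, on $X_{J[\ffp]}^{\mp}(7)$, corresponding to the elliptic curve $E$ with label \LMFDBLabel{1242.m1}, and by the argument of \Cref{thm:sqrt2-congs} this really is $(7,\ffp)$-congruent to $J$. So your stated conclusion (``no $(7,\ffp)$- or $(7,\bar\ffp)$-congruent elliptic curve exists'') would simply be false, and the plan as written terminates with the wrong output. The missing ingredient is the second half of the visibility criterion: even when a $(7,\ffp)$-congruent elliptic curve $E'$ exists, elements of $\Sha(J^{-11}/\bbQ)$ can be made visible in $(J^{-11}\times E')/\Delta$ only if $E'(\bbQ)/7E'(\bbQ)\neq 0$. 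The paper closes this gap by observing that the relevant twist $E^{-11}$ has trivial Mordell--Weil group, so it cannot supply the $(\bbZ/7\bbZ)^2$. Your argument should therefore be: enumerate the rational points on all four twists up to the search bound, pass each through the moduli interpretation to an elliptic curve, twist by $-11$, and check that \emph{every} such curve has zero rank (equivalently, trivial $7$-Selmer after twist). It is that combination — no further points found \emph{and} the one congruent curve being useless — that constitutes the evidence for \Cref{conj:sha-invisible}. Your closing caveat that the evidence is inherently heuristic, because the search is height-bounded and one cannot cheaply rule out larger-height points, is exactly the reason this is stated as a conjecture rather than a theorem.
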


\subsection{Outline of the paper}
\label{sec:outline-paper}
We begin by discussing several well known facts about the modular curve $X(p)$ in \Cref{sec:Xp}. In \Cref{sec:XMp}, following \cite[Section~4.4]{PSS_TOX7APSTX2Y3Z7}, we recall the moduli interpretation for twists of $X(p)$, which in \Cref{sec:XJp} we relate to the torsion of genus $2$ curves with Jacobians having real multiplication (cf.~\cite{F_VEOO7ITTSGOAEC}). In \Cref{sec:twist-princ-X7} we specialise to the case when $p = 7$ and discuss the invariant theory of the Klein quartic $X(7)$ following \cite{F_OFO7A11CEC}.

In \Cref{sec:algo-twist-X7} we present our main algorithm. It takes as input a genus $2$ Jacobian $J/\bbQ$ with RM by an order in which $7$ splits, and outputs four twists of $X(7)$ which parametrise elliptic curves $(7, \ffp)$-congruent to $J/\bbQ$.

The outputs of the algorithm in \Cref{sec:algo-twist-X7} are not guaranteed to be correct, however in \Cref{sec:proving-twists} we prove that the output is correct in many cases (for example for the curves in \Cref{tab:sha-exs,tab:sha-exs-discs}). In particular, in \Cref{sec:proving-twists} we prove that the twists we obtain are isomorphic to those which parametrise elliptic curves $(7,\ffp)$-congruent to $J$.

In \Cref{sec:proving-congs} we prove \Cref{coro:lots-of-sha,thm:sha7-discs} by proving that the pairs $(E,J)$ in \Cref{tab:sha-exs,tab:sha-exs-discs} are $(7,\ffp)$-congruent, and by checking that the local hypotheses in \cite[Theorem~2.2]{F_VEOO7ITTSGOAEC} are satisfied.

Finally in \Cref{sec:worthy-example} we give explicit examples of the Klein quartic twists for the Jacobian of the genus $2$ curve with LMFDB label \LMFDBLabelGenusTwo{385641.a.385641.1}. By searching for rational points on these twists, we give evidence towards \Cref{conj:sha-invisible}.

\subsection{Acknowledgements}
This article formed part of my PhD thesis~\cite[Chapter~6]{F_thesis}. I would like to thank my PhD supervisor Tom Fisher for many discussions and for extensive comments on previous versions of this article. I am grateful to Timo Keller for helpful correspondences and to Maria Corte-Real Santos and Ross Paterson for useful comments and discussions.

This work was supported by the Woolf Fisher and Cambridge Trusts, and by the Royal Society through C{\'e}line Maistret's Dorothy Hodgkin Fellowship. The code associated to this article is written in \texttt{Magma}~\cite{MAGMA} and \texttt{SageMath}~\cite{sagemath} and is publicly available in the GitHub repository~\cite{ME_ELECTRONIC_here}.

\section{The modular curve \texorpdfstring{$X(p)$}{X(p)} and its twists \texorpdfstring{$X_M^\pm(p)$}{X\_M(p)}}
We recall a number of standard facts about the modular curve $X(p)$ and its twists $X_M^\pm(p)$ following e.g., \cite{PSS_TOX7APSTX2Y3Z7} and \cite{F_OFO7A11CEC}. In the case when $p = 7$ the curve $X(7)$ is the Klein quartic~\cite{K_UDTSODEF} (see \cite{E_TKQINT} for a detailed discussion).

Let $K$ be a field of characteristic zero. A symplectic abelian group over $K$ is a pair $(M, e_M)$ where $M$ is a $G_K$-module equipped with a ($G_K$-equivariant) alternating, bilinear pairing $e_M \colon M \times M \to \Kbar^\times$. We equip $\mu_p \times \bbZ/p\bbZ$ with the natural alternating pairing $\langle (\zeta, n) , (\xi, m) \rangle = \zeta^m \xi^{-n}$.

\subsection{The modular curve \texorpdfstring{$X(p)$}{X(p)}}
\label{sec:Xp}
Let $E/K$ be an elliptic curve defined over a field $K$ of characteristic zero. If $p$ is a prime number, we equip $E[p]$ with the structure of a symplectic abelian group via the $p$-Weil pairing $e_{E,p} \colon E[p] \times E[p] \to \mu_p$.

Let $Y(p)/\bbQ$ denote the geometrically irreducible (non-compact) modular curve parametrising elliptic curves with full (symplectic) level $p$ structure. Explicitly, for each field $K/\bbQ$ the $K$-points on $Y(p)$ parametrise (isomorphism classes of) pairs $(E, \iota)$ where $E/K$ is an elliptic curve and $\iota \colon  \mu_p \times \bbZ/p\bbZ \cong E[p]$ is a $G_K$-equivariant isomorphism of symplectic abelian groups. Let $X(p)$ denote the smooth compactification of $Y(p)$.

The group $\UpsN{p}$ of symplectic automorphisms of $\mu_p \times \bbZ/p\bbZ$ acts naturally on $Y(p)$ on the right via $(E, \iota) \mapsto (E, \gamma\iota)$. As an abstract group $\UpsN{p}$ is isomorphic to $\SL_2(\bbZ/p\bbZ)$, but it comes equipped with a non-trivial action of $\Gal(\bbQ(\mu_p)/\bbQ)$. The matrix $\pm I$ acts trivially on $Y(p)$ and therefore the action of $\UpsN{p}$ factors through $\UpsN{p} / \{ \pm I \}$. This action extends to an action on $X(p)$, and the quotient realises the forgetful morphism $X(p) \to X(1)$ given by taking $j$-invariants.

\subsection{The twist \texorpdfstring{$X_M^\pm(p)$}{X\_M{\textasciicircum}r(p)}}
\label{sec:XMp}
Let $(M, e_M)$ be a symplectic abelian group over $K$. Let $r$ be an integer coprime to $p$ and suppose that there exists a $\Kbar$-isomorphism $\phi \colon M \cong \mu_p \times \bbZ/p\bbZ $ such that $\langle \phi(P), \phi(Q) \rangle = e_{M}(P, Q)^r$ for each $P, Q \in \mu_p \times \bbZ/p\bbZ$. 

By the twisting principle we may attach to $\phi$ a cohomology class $\xi \in H^1(G_K, \UpsN{p})$. We have an inclusion $\Gamma_p / \{\pm I\} \hookrightarrow \Aut(X(p))$ and therefore an induced map on cohomology $H^1(G_K, \UpsN{p}) \to H^1(G_K, \Aut(X(p)))$. The image of $\xi$ corresponds (again by the twisting principle) to a twist $X_M^r(p)$ of $X(p)$.

The following lemma is well known and follows by construction (cf. \cite[Section~4.4]{PSS_TOX7APSTX2Y3Z7}).

\begin{lemma}
  \label{lemma:XMr-moduli}
  For each field $L/K$ the $L$-rational points on $X_M^r(p)$ correspond to pairs $(E, \phi)$ where $E/L$ is an elliptic curve and $\phi \colon M \cong E[p]$ is an isomorphism of $G_L$-modules for which $e_{E,p}(\phi(P), \phi(Q)) = e_{M}(P, Q)^r$ for each $P, Q \in M$.
\end{lemma}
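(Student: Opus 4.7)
The plan is to apply the twisting principle directly to the moduli interpretation of $X(p)$ recalled in \Cref{sec:Xp}. Fix the $\Kbar$-isomorphism $\phi \colon M \xrightarrow{\sim} \mu_p \times \bbZ/p\bbZ$ posited in the statement, and define a $1$-cocycle $\xi \in Z^1(G_K, \UpsN{p})$ by $\xi_\sigma = \phi \circ \sigma(\phi)^{-1}$. The hypothesis that $\phi$ rescales the pairing by the $r$-th power ensures that each $\xi_\sigma$ preserves the standard pairing on $\mu_p \times \bbZ/p\bbZ$, so $\xi$ indeed takes values in $\UpsN{p}$; its class in $H^1(G_K, \UpsN{p})$ is (by construction) the one used to define $X_M^r(p)$ as a twist of $X(p)$ in \Cref{sec:XMp}.

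By general principles of twisting, after base change to $\Kbar$ there is a canonical isomorphism $X_M^r(p)_{\Kbar} \cong X(p)_{\Kbar}$ under which the $L$-rational points of $X_M^r(p)$ correspond to those $\Kbar$-points $x$ of $X(p)$ satisfying the twisted equivariance $\sigma(x) = x \cdot \xi_\sigma$ for every $\sigma \in G_L$. Unwinding the moduli description of $X(p)$, a non-cuspidal $\Kbar$-point is a pair $(E, \iota)$ with $\iota \colon \mu_p \times \bbZ/p\bbZ \xrightarrow{\sim} E[p]$ a symplectic isomorphism; the twisted equivariance then translates into the statement that $E$ descends to an elliptic curve over $L$ and $\sigma(\iota) = \iota \circ \xi_\sigma$ for all $\sigma \in G_L$.

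To produce the claimed bijection, given such a $(E, \iota)$ I would set $\tilde{\phi} = \iota \circ \phi \colon M \to E[p]$. The cocycle identity $\xi_\sigma = \phi \circ \sigma(\phi)^{-1}$ together with $\sigma(\iota) = \iota \circ \xi_\sigma$ immediately gives $G_L$-equivariance of $\tilde{\phi}$, and combining the compatibility of $\iota$ with the standard pairing with the $r$-rescaling property of $\phi$ yields $e_{E,p}(\tilde{\phi}(P), \tilde{\phi}(Q)) = e_M(P, Q)^r$. Conversely, given a pair $(E, \tilde{\phi})$ as in the statement, the composition $\iota = \tilde{\phi} \circ \phi^{-1}$ is symplectic over $\Kbar$ and satisfies the twisted equivariance, and therefore defines an $L$-point of $X_M^r(p)$. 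The two constructions are mutually inverse up to the natural notion of equivalence of level structures, and both send cusps to cusps, which handles the remaining points.

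The only substantive point requiring care, and hence the main bookkeeping obstacle, is tracking the exponent $r$ consistently through these translations---between the pairing $e_M$ on $M$, the $r$-rescaled pairing underlying the definition of $\xi$, and the Weil pairing on $E[p]$. Beyond this, the argument is purely formal, which is why the lemma is stated as following by construction (cf.~\cite[Section~4.4]{PSS_TOX7APSTX2Y3Z7}).
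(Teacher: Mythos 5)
Your proof is correct and follows exactly the route the paper intends; the paper gives no argument beyond asserting that the lemma ``follows by construction,'' and what you have written is a careful unwinding of that construction via the twisting principle. You verify all the needed compatibilities: that $\xi_\sigma$ lands in $\UpsN{p}$ because $\phi$ rescales the pairing by $r$, that $\tilde\phi = \iota\circ\phi$ is $G_L$-equivariant, and that the Weil pairing of $E$ pulls back to the $r$-th power of $e_M$.

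One small remark on conventions: you define the cocycle as $\xi_\sigma = \phi\circ\sigma(\phi)^{-1}$, whereas in \Cref{sec:twist-princ-X7} the paper uses $\sigma(\phi)\phi^{-1}$. These are cocycles for opposite composition conventions, and since $\UpsN{p}$ is non-abelian one cannot simply pass from one to the other by inversion. Your choice is the one that makes your computation $\sigma(\tilde\phi) = \iota\circ\xi_\sigma\circ\sigma(\phi) = \iota\circ\phi = \tilde\phi$ come out cleanly with $\UpsN{p}$ acting on level structures on the right via $\iota\mapsto\iota\circ\gamma$, so it is internally consistent. If you wanted your write-up to match the paper's explicit formula later on, you would either flip the cocycle or flip the action convention, but neither change affects the validity of the lemma itself.
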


If $a$ is an integer coprime to $p$, pre-composing an isomorphism $\phi \colon M \cong E[p]$ with the multiplication-by-$a$-map on $M$ yields an isomorphism $\phi'$ for which $e_{E,p}(\phi'(P), \phi'(Q)) = e_{M}(P,Q)^{a^2 r}$. It therefore suffices to consider the class of $r$ in $(\bbZ/p\bbZ)^\times$ modulo squares. We write $X_M(p) = X_M^+(p)$ when $r$ is a square in $(\bbZ/p\bbZ)^\times$ and $X_M^{-}(p)$ when $r$ is not a square in $(\bbZ/p\bbZ)^\times$.

\subsection{The twist \texorpdfstring{$X^r_{J[\ffp]}(p)$}{X\_J{\textasciicircum}r(p)}}
\label{sec:XJp}
Let $C/K$ be a genus $2$ curve and let $J = \Jac(C)$ be the Jacobian of $C$. Let $\widehat{J}$ denote the dual of $J$ and equip $J$ with the canonical principal polarisation $\lambda \colon J \to \widehat{J}$ arising from the theta divisor. The principal polarisation $\lambda$ induces an involution on the endomorphism ring of $J$ known as the \emph{Rosati involution}. Precisely, if $\psi \in \End(J)$ then the Rosati involution is given by $\psi \mapsto \psi^{\dagger} = \lambda^{-1} \hat{\psi} \lambda$. Here $\hat{\psi} \colon \widehat{J} \to \widehat{J}$ denotes the dual isogeny.

Let $D \equiv 0,1 \pmod{4}$ be a positive non-square integer and let $\mathcal{O}_D$ be the quadratic ring of discriminant $D$. We say that $J$ has \emph{real multiplication (RM) by $\mathcal{O}_D$} if there exists an inclusion $\mathcal{O}_D \hookrightarrow \End^{\dagger}_{K}(J)$ where $\End_K^\dagger(J) \subset \End_K(J)$ is the subring of endomorphisms fixed by the Rosati involution.

The choice of principal polarisation $\lambda$ induces the alternating, bilinear $p$-Weil pairing $e_{J,p} \colon J[p] \times J[p] \to \mu_p$.

\begin{lemma}
  \label{lemma:split}
  Let $J/K$ be a genus $2$ Jacobian with RM by $\mathcal{O}_D$. Suppose that $p$ is a prime number such that $p$ splits as a product $(p) = \ffp\bar{\ffp}$ in $\mathcal{O}_D$. Then there exists an isomorphism of $G_K$-modules $J[p] \cong J[\ffp] \oplus J[\bar{\ffp}]$. Moreover, the $p$-Weil pairing $e_{J,p}$ restricts to an alternating pairing $J[\ffp] \times J[\ffp] \to \mu_p$, and likewise for $J[\bar{\ffp}]$.
\end{lemma}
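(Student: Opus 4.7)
The plan is to use the idempotent decomposition coming from the Chinese Remainder Theorem and the compatibility of the Weil pairing with the Rosati involution.

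First I would observe that, by assumption, $(p) = \mathfrak{p}\bar{\mathfrak{p}}$ in $\mathcal{O}_D$, so the Chinese Remainder Theorem gives an isomorphism $\mathcal{O}_D/p \cong \mathcal{O}_D/\mathfrak{p} \times \mathcal{O}_D/\bar{\mathfrak{p}} \cong \mathbb{F}_p \times \mathbb{F}_p$. Pulling back the pair of idempotents from the right-hand side produces $e,\bar{e} \in \mathcal{O}_D$ with $e + \bar{e} \equiv 1 \pmod{p}$, $e \equiv 1 \pmod{\mathfrak{p}}$, $e \equiv 0 \pmod{\bar{\mathfrak{p}}}$, and vice versa for $\bar{e}$. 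Since $\mathcal{O}_D \hookrightarrow \End_K(J)$ by hypothesis, the endomorphisms $e$ and $\bar{e}$ commute with $G_K$, and a direct computation shows that $eJ[p] = J[\bar{\mathfrak{p}}]$ and $\bar{e}J[p] = J[\mathfrak{p}]$ (since annihilation by $\mathfrak{p}$ in $\mathcal{O}_D/p$ is the same as lying in the $\bar{e}$-eigenspace). This gives the desired $G_K$-equivariant decomposition $J[p] = J[\mathfrak{p}] \oplus J[\bar{\mathfrak{p}}]$.

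Next I would show that $J[\mathfrak{p}]$ and $J[\bar{\mathfrak{p}}]$ are orthogonal complements under $e_{J,p}$. The key input is the compatibility of the $p$-Weil pairing with the Rosati involution, namely $e_{J,p}(\alpha P, Q) = e_{J,p}(P, \alpha^\dagger Q)$ for every $\alpha \in \End_K(J)$ and $P, Q \in J[p]$. By the definition of RM by $\mathcal{O}_D$, every $\alpha \in \mathcal{O}_D$ satisfies $\alpha^\dagger = \alpha$. Taking $P \in J[\mathfrak{p}]$ and $Q \in J[\bar{\mathfrak{p}}]$, and choosing the idempotent $\bar{e}$ above (so that $\bar{e}P = P$ and $\bar{e}Q = 0$), we compute
\begin{equation*}
  e_{J,p}(P, Q) = e_{J,p}(\bar{e} P, Q) = e_{J,p}(P, \bar{e}^\dagger Q) = e_{J,p}(P, \bar{e} Q) = e_{J,p}(P, 0) = 1.
\end{equation*}
Hence $J[\mathfrak{p}] \perp J[\bar{\mathfrak{p}}]$.

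Finally, since $e_{J,p}$ is non-degenerate on the $4$-dimensional $\mathbb{F}_p$-vector space $J[p]$ and the two $2$-dimensional summands are mutually orthogonal, the restriction of $e_{J,p}$ to each summand must be non-degenerate. The restriction is alternating because $e_{J,p}$ is, giving the claim for both $J[\mathfrak{p}]$ and (by the symmetric argument) $J[\bar{\mathfrak{p}}]$. I expect the only mild subtlety to be checking that the idempotents $e, \bar{e}$ really identify the $\mathfrak{p}$- and $\bar{\mathfrak{p}}$-torsion subgroups (rather than swapping them), but this is a bookkeeping matter and all remaining ingredients are standard properties of the Weil pairing and Rosati involution.
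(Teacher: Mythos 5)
Your argument is correct and, as far as one can tell, follows the same route as the paper's cited sources (the paper itself only cites \cite[Lemma~3.4]{CFM_FMFG2CWRM} and \cite[Proposition~6.1]{F_VEOO7ITTSGOAEC} rather than giving the proof): decompose $J[p]$ via idempotents lifted from $\mathcal{O}_D/p \cong \bbF_p \times \bbF_p$, then use Rosati-compatibility of the Weil pairing together with $\mathcal{O}_D \subset \End_K^\dagger(J)$ to show the two summands are orthogonal, from which non-degeneracy of the restricted pairing on each summand follows. Note that the lemma as stated only asserts the restriction is alternating (automatic), but the non-degeneracy you prove is the substantive content needed for the symplectic structure on $J[\ffp]$, so it is right to include it.

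The one concrete slip is the labeling, which you flag yourself: with $e \equiv 1 \pmod{\ffp}$, $e \equiv 0 \pmod{\bar{\ffp}}$, the ideal $\ffp/p \subset \mathcal{O}_D/p$ is generated by $\bar{e}$, so $J[\ffp] = \ker\bar{e} = eJ[p]$, not $J[\bar{\ffp}]$ as you wrote; equivalently, for $P \in J[\ffp]$ and $Q \in J[\bar{\ffp}]$ one has $\bar{e}P = 0$ and $\bar{e}Q = Q$, the reverse of what appears in your orthogonality display. The chain
\begin{equation*}
  e_{J,p}(P,Q) = e_{J,p}(P, \bar{e}Q) = e_{J,p}(\bar{e}^\dagger P, Q) = e_{J,p}(\bar{e}P, Q) = e_{J,p}(0, Q) = 1
\end{equation*}
gives the correct version, and your remark that the calculation is insensitive to the swap is accurate. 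With the labels fixed, the proof is complete.
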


\begin{proof}
  This is \cite[Lemma~3.4]{CFM_FMFG2CWRM}, cf. the proof of \cite[Proposition~6.1]{F_VEOO7ITTSGOAEC} when $\mathcal{O}_D = \bbZ[\sqrt{2}]$.
\end{proof}

Note that by \Cref{lemma:split} we may define the twists $X_{J[\ffp]}^\pm(p)$ and $X_{J[\bar{\ffp}]}^\pm(p)$ which (by \Cref{lemma:XMr-moduli}) parametrise elliptic curves which are $(p, \ffp)$-congruent (respectively $(p, \bar{\ffp})$-congruent) to the genus $2$ Jacobian $J/K$.

\subsection{Explicit twisting for \texorpdfstring{$X(7)$}{X(7)}}
\label{sec:twist-princ-X7}
Following \cite{F_OFO7A11CEC} we give an explicit description for the twists $X_M^\pm(7)$ as plane quartic curves. Recall that we write $\UpsN{7}$ for the automorphism group (scheme) of the symplectic abelian group $\mu_7 \times \bbZ/7\bbZ$. Following Klein~\cite{K_UDTSODEF,E_TKQINT,F_OFO7A11CEC} consider the representation $\SL_2(\bbZ/7\bbZ) \to \GL_3(\Kbar)$ which maps the generators $S = \left(\begin{smallmatrix} 0 & -1 \\ 1 & 0 \end{smallmatrix}\right)$ and $T = \left(\begin{smallmatrix} 1 & 1 \\ 0 & 1 \end{smallmatrix}\right)$ to
\begin{equation*}
  \frac{1}{\sqrt{-7}}
  \begin{pmatrix}
    \zeta_7 - \zeta_7^6 & \zeta_7^2 - \zeta_7^5 & \zeta_7^4 - \zeta_7^3 \\
    \zeta_7^2 - \zeta_7^5 & \zeta_7^4 - \zeta_7^3 & \zeta_7 - \zeta_7^6 \\
    \zeta_7^4 - \zeta_7^3 & \zeta_7 - \zeta_7^6 & \zeta_7^2 - \zeta_7^5 \\
  \end{pmatrix}
  \quad \text{and} \quad
  \begin{pmatrix}
    \zeta_7 & 0 & 0 \\
    0 & \zeta_7^4 & 0 \\
    0 & 0 & \zeta_7^2 \\
  \end{pmatrix}.
\end{equation*}
Composing with the natural $\Kbar$-isomorphism $\UpsN{7} \cong \SL_2(\bbZ/7\bbZ)$ gives a $G_K$-equivariant homomorphism $\rho \colon \UpsN{7} \to \GL_3(\Kbar)$. When $\UpsN{7}$ acts on $\bbP^2$ via ${\rho}$ the Klein quartic curve $X(7) \subset \bbP^2$ given by
\begin{equation*}
 X(7) : x_0^3x_1 + x_1^3x_2 + x_0 x_2^3=0 
\end{equation*}
is fixed.

Let $\phi \colon M \cong \mu_7 \times \bbZ/7\bbZ$ be a symplectic $\Kbar$-isomorphism. Let $\xi \colon G_{K} \to \GL_3(\Kbar)$ denote the cocycle obtained from $\phi$ by the twisting principle. Explicitly, $\xi$ may be taken to be the cocycle $\sigma \mapsto {\rho} ( \sigma(\phi) \phi^{-1} )$ where $\sigma(\phi) \colon M \cong \mu_7 \times \bbZ/7\bbZ$ is the $\Kbar$-isomorphism given by $P \mapsto \sigma ( \phi (\sigma^{-1} P))$. By Hilbert's Theorem 90 the cocycle $\xi$ is a coboundary, that is, there exists a matrix $A \in \GL_3(\Kbar)$ such that $\sigma(A^{-1}) A = \xi(\sigma)$ for every $\sigma \in G_K$.

\begin{lemma}
  \label{lemma:XM7-explicit}
  The curves $X_M(7)$ and $X_M^{-}(7)$ are isomorphic to the images of the morphisms $X(7) \to \bbP^2$ given by
  \begin{equation*}
    \mathbf{x} \mapsto A \mathbf{x} \quad \text{and} \quad \mathbf{x} \mapsto A^{-{T}}\mathbf{x}
  \end{equation*}
  respectively. Here $\mathbf{x}$ is a point in $\bbP^2$ written as a column vector and $A^{-{T}}$ denotes the inverse transpose of $A$.
\end{lemma}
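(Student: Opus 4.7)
The plan is to apply the twisting principle directly to the Klein quartic model of $X(7) \subset \bbP^2$, treating the two cases separately. Write $f(\mathbf{x}) = x_0^3x_1 + x_1^3x_2 + x_0x_2^3 \in \bbQ[x_0,x_1,x_2]$ for the Klein equation and let $X'$ denote the image of $X(7)$ under $\mathbf{x} \mapsto A \mathbf{x}$, which is cut out by $g(\mathbf{x}) := f(A^{-1}\mathbf{x})$.

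First I would verify that $X'$ is defined over $K$. Using the coboundary relation $\sigma(A^{-1}) = \xi(\sigma)\,A^{-1}$ and the fact that $f \in \bbQ[\mathbf{x}]$ is a semi-invariant under $\rho(\UpsN{7})$ (because $X(7)$ is $\rho$-stable), one computes
\begin{equation*}
  \sigma(g)(\mathbf{x}) = f\bigl(\sigma(A^{-1})\mathbf{x}\bigr) = f\bigl(\xi(\sigma)\,A^{-1}\mathbf{x}\bigr) = \lambda_\sigma\, g(\mathbf{x})
\end{equation*}
for some scalar $\lambda_\sigma \in \Kbar^\times$. After rescaling $g$ we obtain an equation with coefficients in $K$, so that $X' \subset \bbP^2_K$. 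Next, I would identify $X'$ with $X_M(7)$: the $\Kbar$-isomorphism $A^{-1}\colon X'_{\Kbar} \to X(7)_{\Kbar}$ has Galois cocycle cohomologous to $\xi$ in $Z^1(G_K, \rho(\UpsN{7}))$, and composing with $\rho(\UpsN{7})/\{\pm I\} \hookrightarrow \Aut(X(7))$ recovers by construction the cohomology class defining $X_M(7)$.

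For $X_M^-(7)$, the key observation is that the automorphism group of $X(7)$ has $\rho(\UpsN{7}/\{\pm I\}) \cong \PSL_2(\bbF_7)$ as an index-two subgroup, with the outer coset realised by the contragredient representation $\rho^\vee(\gamma) := \rho(\gamma)^{-T}$. Replacing $\rho$ by $\rho^\vee$ throughout, the coboundary matrix becomes $A^{-T}$ (since $\sigma(A^T) A^{-T} = \xi(\sigma)^{-T}$), and the same two-step argument above shows that the image of $X(7)$ under $\mathbf{x} \mapsto A^{-T}\mathbf{x}$ is a $K$-rational plane quartic whose cohomology class differs from that of $X_M(7)$ precisely by the outer automorphism generating $\Aut(X(7))/\rho(\UpsN{7}/\{\pm I\}) \cong (\bbZ/7\bbZ)^\times/(\bbZ/7\bbZ)^{\times 2}$.

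The main obstacle is this last identification: showing that passing from $\rho$ to $\rho^\vee$ corresponds precisely to multiplying the pairing exponent $r$ by a non-square, so that the resulting twist is $X_M^-(7)$ rather than $X_M(7)$ again. This is essentially classical, going back to Klein's analysis of the two non-isomorphic $3$-dimensional irreducible representations of $\SL_2(\bbF_7)$, which are contragredient to each other and interchanged by the outer action of $\operatorname{GL}_2(\bbF_7)/(\{\pm I\}\cdot\SL_2(\bbF_7))$; I would invoke this directly as in~\cite{F_OFO7A11CEC}.
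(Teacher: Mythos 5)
Your treatment of $X_M(7)$ is sound and follows the standard twisting computation that the paper defers to \cite[Lemma~3.2]{F_OFO7A11CEC}. The verification that $g = f \circ A^{-1}$ is a semi-invariant, and hence defines a $K$-rational plane quartic, and the identification of the cocycle of $A^{-1}$ with $\xi$ are exactly right.

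However, the framing of the $X_M^-(7)$ case contains a concrete conceptual error. You assert that $\Aut(X(7))$ contains $\rho(\UpsN{7}/\{\pm I\}) \cong \PSL_2(\bbF_7)$ as an \emph{index-two} subgroup, and later that the two cohomology classes differ by a generator of $\Aut(X(7))/\rho(\UpsN{7}/\{\pm I\})$. Both statements are false: by the Hurwitz bound $84(g-1) = 168$, one has $\Aut(X(7)_{\Kbar}) \cong \PSL_2(\bbF_7)$ on the nose, so the quotient you invoke is trivial. There is no ambient automorphism of $X(7)$ realising the contragredient. What is actually going on is that the contragredient $\rho^\vee$ is a \emph{second, non-isomorphic} $3$-dimensional representation of $\UpsN{7}$, and both $\rho$ and $\rho^\vee$ induce isomorphisms $\UpsN{7}/\{\pm I\} \xrightarrow{\sim} \Aut(X(7)) \subset \PGL_3$ which differ by the \emph{outer automorphism of $\PSL_2(\bbF_7)$}, not by an element of some overgroup of $\Aut(X(7))$. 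The two twists $X_M^{\pm}(7)$ arise as the pushforwards $\rho_* \xi_0$ and $\rho^\vee_* \xi_0$ of the torsor cocycle $\xi_0 = \sigma(\phi)\phi^{-1} \in H^1(G_K,\UpsN{7})$ under these two distinct maps to $\Aut(X(7))$; equivalently, an anti-symplectic choice of $\phi$ replaces the torsor cocycle by its conjugate under the outer automorphism of $\UpsN{7}$, which on the level of $\rho$ turns $\xi(\sigma)$ into a cocycle cohomologous to $\xi(\sigma)^{-T}$, whence the matrix $A^{-T}$. Your formula $\sigma(A^{T})A^{-T} = \xi(\sigma)^{-T}$ is correct and is the computational core of the argument, but the surrounding justification of \emph{why} $\rho^\vee$ corresponds to the non-square pairing exponent needs to be repaired along these lines (or, as the paper itself does, simply cited from Fisher's Lemma~3.2, where this is carried out).
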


\begin{proof}
  The proof is identical to \cite[Lemma~3.2]{F_OFO7A11CEC}.
\end{proof}

Finally, we note that it is simple to recover the moduli interpretation for a given twist of $X(7)$ following~\cite[Section~4.1]{F_OFO7A11CEC} (cf. \cite[Section~7.1]{PSS_TOX7APSTX2Y3Z7}).

\begin{lemma}
  \label{lemma:calX-moduli}
  Let $\mathcal{X}/K$ be a twist of the Klein quartic given by the vanishing of a homogeneous quartic polynomial $\mathcal{F}(x_0, x_1, x_2)$ in $\bbP^2$. The $j$-invariant map $\mathcal{X} \to X(1)$ is defined over $K$ and is given by $1728 \frac{c_4^3}{c_4^3 - c_6^2}$ where $c_4$ and $c_6$ are defined by
  \begin{equation*}
    \setlength\arraycolsep{2pt}
    D = \frac{-1}{54} \times \left|
      \begin{matrix}
        \frac{\partial^2 \mathcal{F}}{\partial x_0^2} & \frac{\partial^2 \mathcal{F}}{\partial x_0 \partial x_1} & \frac{\partial^2 \mathcal{F}}{\partial x_0 \partial x_2} \\[1em]
        \frac{\partial^2 \mathcal{F}}{\partial x_0 x_1} & \frac{\partial^2 \mathcal{F}}{\partial x_1^2} & \frac{\partial^2 \mathcal{F}}{\partial x_1 \partial x_2} \\[1em]
        \frac{\partial^2 \mathcal{F}}{\partial x_0 x_2} & \frac{\partial^2 \mathcal{F}}{\partial x_1 \partial x_2} & \frac{\partial^2 \mathcal{F}}{\partial x_2^2}
      \end{matrix}
    \right|,
    \quad
    c_4 = \frac{1}{9} \times
    \left|
      \begin{matrix}
        \frac{\partial^2 \mathcal{F}}{\partial x_0^2}            & \frac{\partial^2 \mathcal{F}}{\partial x_0 \partial x_1} & \frac{\partial^2 \mathcal{F}}{\partial x_0 \partial x_2} & \frac{\partial D}{\partial x_0 } \\[1em]
        \frac{\partial^2 \mathcal{F}}{\partial x_0 \partial x_1} & \frac{\partial^2 \mathcal{F}}{\partial x_1^2}            & \frac{\partial^2 \mathcal{F}}{\partial x_1 \partial x_2} & \frac{\partial D}{\partial x_1}  \\[1em]
        \frac{\partial^2 \mathcal{F}}{\partial a \partial x_2}   & \frac{\partial^2 \mathcal{F}}{\partial x_1 \partial x_2} & \frac{\partial^2 \mathcal{F}}{\partial x_2^2}            & \frac{\partial D}{\partial x_2}  \\[1em]
        \frac{\partial D}{\partial x_0}                & \frac{\partial D}{\partial x_1}                & \frac{\partial D}{\partial x_2}                & 0
      \end{matrix}
    \right|,
    \quad
    c_6 = \frac{1}{14} \times
    \left|
      \begin{matrix}  
        \frac{\partial \mathcal{F}}{\partial x_0}   & \frac{\partial \mathcal{F}}{\partial x_1}   & \frac{\partial \mathcal{F}}{\partial x_2} \\[1em]
        \frac{\partial D}{\partial x_0}   & \frac{\partial D}{\partial x_1}   & \frac{\partial D}{\partial x_2} \\[1em]
        \frac{\partial c_4}{\partial x_0} & \frac{\partial c_4}{\partial x_1} & \frac{\partial c_4}{\partial x_2}
      \end{matrix}
    \right|.  
  \end{equation*}
\end{lemma}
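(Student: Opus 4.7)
The proof proceeds by the classical invariant theory of ternary quartics. The first step is to recognise $D$, $c_4$, and $c_6$ as (suitably normalised) $\GL_3$-covariants of the ternary quartic $\mathcal{F}$: $D$ is essentially the Hessian, and $c_4$ and $c_6$ are constructed from $\mathcal{F}$ and $D$ via bordered Hessians and a Jacobian determinant. A direct calculation shows that under a linear substitution $\tilde{\mathcal{F}}(\mathbf{x}) = \mathcal{F}(A\mathbf{x})$ with $A \in \GL_3(\Kbar)$, each of these covariants transforms as $(\det A)^{e}\cdot(\textnormal{covariant of }\mathcal{F}) \circ A$ for an explicit integer weight $e$. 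In particular, the degree-$0$ rational function $c_4^3/(c_4^3 - c_6^2)$ on $\bbP^2$ is invariant under such substitutions.

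Second, I would verify the formula directly on the Klein quartic $X(7) \colon \mathcal{F}_0 = x_0^3x_1 + x_1^3x_2 + x_0 x_2^3$. Here the forgetful morphism $X(7) \to X(1)$ is the classical $j$-map on the modular curve, and an explicit rational expression for it on $X(7) \subset \bbP^2$ is recorded in \cite[Section~4.1]{F_OFO7A11CEC}. Substituting $\mathcal{F}_0$ into the definitions of $D$, $c_4$, $c_6$ from the statement and simplifying, one checks that $1728 c_4^3/(c_4^3 - c_6^2)$ coincides with this classical expression for $j$; the role of the normalising scalars $-1/54$, $1/9$, $1/14$ is precisely to cancel the constants which would otherwise appear, so that the universal formula $j = 1728 c_4^3/(c_4^3 - c_6^2)$ emerges cleanly.

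Third, by \Cref{lemma:XM7-explicit}, every twist $\mathcal{X}$ of $X(7)$ arising in our setup is cut out in $\bbP^2$ by a quartic of the form $\mathcal{F}(\mathbf{x}) = \mathcal{F}_0(A^{-1}\mathbf{x})$ (or the analogous expression built from $A^{-T}$) for some $A \in \GL_3(\Kbar)$, and the $j$-invariant is insensitive to the choice of coordinates. Combined with the substitution-invariance from step one, the identity established on $X(7)$ transports without change to $\mathcal{X}$. Because $\mathcal{X}$ is defined over $K$ the quartic $\mathcal{F}$ may be chosen with coefficients in $K$, and since $D$, $c_4$, $c_6$ are built from $\mathcal{F}$ by differentiation and polynomial arithmetic, $1728 c_4^3/(c_4^3 - c_6^2)$ defines a $K$-rational morphism $\mathcal{X} \to X(1)$.

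The main obstacle is the second step: the explicit identification pinning down the normalising constants $-1/54$, $1/9$, $1/14$. All ingredients are classical and the verification reduces to a one-off computer algebra check on a single explicit quartic, but one must carefully track how the Hessian-type covariants of a ternary quartic specialise to the Weierstrass-type invariants $c_4$ and $c_6$ of the associated elliptic curve under the moduli map. Once this single identity is checked, steps one and three are essentially formal and the lemma follows.
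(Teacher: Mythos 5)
The paper gives no proof of this lemma; it simply states it and directs the reader to Fisher \cite[Section~4.1]{F_OFO7A11CEC} and Poonen--Schaefer--Stoll \cite[Section~7.1]{PSS_TOX7APSTX2Y3Z7}. Your three-step argument (covariance of $D$, $c_4$, $c_6$ as ternary-quartic covariants; one-time verification on the standard model $\mathcal{F}_0 = x_0^3 x_1 + x_1^3 x_2 + x_0 x_2^3$; transport to any twist) is exactly the argument those references supply, so you have correctly reconstructed the intended proof.

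Two small remarks. First, it is worth recording that the covariance bookkeeping in your step one does close up: under $\mathcal{F} \mapsto \mathcal{F}\circ A$ the Hessian $D$ acquires $(\det A)^2$; the bordered $4\times 4$ determinant defining $c_4$ transforms by conjugation by $\operatorname{diag}(A, (\det A)^2)$ and so acquires $(\det A)^6$; and the Jacobian defining $c_6$ acquires the product $(\det A)^2\cdot(\det A)^6\cdot \det A = (\det A)^9$. Hence $c_4^3$ and $c_6^2$ both acquire $(\det A)^{18}$ and the ratio $c_4^3/(c_4^3 - c_6^2)$ is a strict $\GL_3$-invariant rational function, which is precisely what step three needs. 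Second, your step three is phrased more narrowly than necessary: you do not need to restrict to the twists $X_M^{\pm}(7)$ coming from \Cref{lemma:XM7-explicit}. Since the Klein quartic is canonically embedded, \emph{every} twist $\mathcal{X}\subset \bbP^2$ of $X(7)$ is cut out by $\mathcal{F}_0\circ A^{-1}$ for some $A \in \GL_3(\Kbar)$, and (because automorphisms of $X(7)$ preserve the $j$-map) the transported function is independent of the choice of $\Kbar$-isomorphism $\mathcal{X}\cong X(7)$. This gives the lemma in the generality in which it is stated, and your observation that $D$, $c_4$, $c_6$ are universal integral polynomials in the coefficients of $\mathcal{F}$ then gives $K$-rationality as you say.
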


\section{Computing approximations to twists of \texorpdfstring{$X(7)$}{X(7)}}	
Fix a primitive $7^{\text{th}}$ root of unity $\zeta_7$, and let $J/\bbQ$ be the Jacobian of a genus $2$ curve $C/\bbQ$. Suppose that $J$ has RM by the quadratic order $\mathcal{O}_D$ of fundamental discriminant $D > 0$. Suppose that $7$ splits in $\mathcal{O}_D$ and that we have a factorisation $(7) = \ffp \bar{\ffp}$ (where $\bar{\ffp}$ denotes the conjugate of $\ffp$). We assume throughout this section that $J[\ffp]$ is an irreducible $G_\bbQ$-module.

Let $\mathcal{K} = J/\{\pm 1\}$ denote the Kummer surface of $J$. We identify $\mathcal{K}$ with a singular quartic surface in $\bbP^3$ by the embedding in \cite[(3.1.8)]{CF_PTAMAOCOG2}. Let $x_J$ denote the quotient morphism $J \to \mathcal{K}$. If $\psi \colon J \to J'$ is an isogeny we write $\mathcal{K}[\psi] = x_J(J[\psi])$.

\subsection{Hilbert's Theorem 90 is effective}
\label{sec:effect-hilb-theor}
To compute twists of $X(7)$ using \Cref{lemma:XM7-explicit} we will need to compute matrices which realise a given cocycle as a coboundary. Towards this, note that the standard proof of Hilbert's Theorem 90 is ``nearly'' effective. Indeed following \cite[Proposition~X.3]{S_LF} let $L/K$ be a finite extension of infinite fields, let $\xi \in H^1(\Gal(L/K), \GL_n(L))$ be a $1$-cocycle, and choose an element $c \in \GL_n(L)$. Define a matrix $b \in \mathrm{M}_n(L)$ by the Poincar{\'e} series
\begin{equation*}
  b = \sum_{\sigma \in \Gal(L/K)} \xi(\sigma) \sigma(c).
\end{equation*}
If $c$ is chosen so that $b$ is invertible, then $\xi(\sigma) = \sigma(b)^{-1} b$ and it is immediate that $\xi$ is a coboundary. If $L$ is infinite the linear independence of field embeddings guarantees the existence of $c$. Indeed, for a fixed cocycle $\xi$, the failure of $b$ to be invertible is a Zariski closed condition on the matrix $c$. In particular, for a generic choice of $c$, the matrix $b$ will be invertible. The proof therefore suggests an algorithm.

\begingroup
\begin{algorithm}[H]
  \caption{Generating matrices which realise Hilbert's Theorem~90.} \label{algo:hilb-90}
  \begin{flushleft}
    \textbf{Input:} A cocycle $\xi \in H^1(\Gal(L/K), \GL_n(L))$. \\
    \textbf{Output:} A matrix $b \in \GL_n(L)$ such that $\xi(\sigma) = \sigma(b)^{-1} b$ for each $\sigma \in \Gal(L/K)$.\\
  \end{flushleft}
  \begin{algorithmic}[1]
    \STATE \label{step:choose} Choose ``randomly'' a matrix $c \in \GL_n(L)$.
    \STATE Compute the matrix $b = \sum_{\sigma} \xi(\sigma) \sigma(c)$.
    \IF{$\det(b) \neq 0$}
    \RETURN $b$
    \ELSE
    \STATE \label{step:redo} Return to step 1.
    \ENDIF
  \end{algorithmic}
\end{algorithm}
\endgroup

\begin{remark}
  \label{rmk:good-matrix-c}
  In step~\ref{step:choose} of \Cref{algo:hilb-90} the user must choose a matrix $c \in \GL_n(L)$. In our application the extension $L/\bbQ$ will be a finite extension and we will have access to a LLL-reduced $\bbZ$-basis $\{a_1, ..., a_m\}$ for the ring of integers $\mathcal{O}_L \subset L$. We choose ``small'' elements of $L$ by generating a tuple $x_1,...,x_m \in \{0,\pm 1\}$ and considering the element $\sum_{i=1}^m x_i a_i \in \mathcal{O}_L$. This approach extends to choosing a matrix $c \in \GL_n(L)$ by choosing the $n^2$ entries as described (in practice, we also choose almost all $x_i$ to be equal to zero).  
\end{remark}

\begin{remark}
  In principle the iteration in Step~\ref{step:redo} in \Cref{algo:hilb-90} may be called many times. In practice, however, we have found very few instances when \Cref{algo:hilb-90} fails to terminate in one iteration. 
\end{remark}

\subsection{The main algorithm}
\label{sec:algo-twist-X7}
We write $L = \bbQ(\mathcal{K}[\ffp])$. Note that for generic $J/\bbQ$, we have an isomorphism of abstract groups $\Gal(L / \bbQ) \cong \GL_2(\bbZ/7\bbZ)/\{\pm 1\}$.
Suppose that we have degree $24$ polynomials $g_1(t)$, $g_2(t)$, $g_3(t) \in \bbQ[t]$ such that 
\begin{equation}
  \label{eqn:fi-set}
  \mathcal{K}[\ffp] \subset \{ (1 : \alpha_1 : \alpha_2 :  \alpha_3) \in \mathcal{K} :  g_1(\alpha_1) =  g_2(\alpha_2) =  g_3(\alpha_3) = 0 \} \cup \{(0:0:0:1)\}.
\end{equation}
In \Cref{sec:an-expl-descr} we discuss how the polynomials $g_i(t)$ may be computed.

We fix an auxiliary prime $q \neq 7$ not dividing the discriminant of $g_1(t)$ and at which $J$ has good reduction. Further suppose that the minimal polynomial of $\zeta_7$ is irreducible over $\bbF_q$ and that $L$ is equal to the splitting field of $g_1(t)$. Let $\mathfrak{q}$ be a prime of $L$ dividing $q$ and denote by $L_{\mathfrak{q}}$ and $l_{\mathfrak{q}}$ the completion and residue field of $L$ at $\mathfrak{q}$ respectively.

Our algorithm proceeds as follows:

\begin{enumerate}[label={\small\arabic*:}]
\item
  We compute the Galois group of $g_1(t)$ using \texttt{GaloisGroup} in \texttt{Magma}. This gives a group $G \subset S_{24}$ and a $G$-set $\{r_1, ..., r_{24}\}$ of ($\mathfrak{q}$-adic approximations to) the roots of $g_1(t)$ in $L_{\mathfrak{q}}$ such that $G$ gives the action of $\Gal(L/\bbQ)$ on the roots of $g_1(t)$ in $L$.
  
\item
  We compute (a $\mathfrak{q}$-adic approximation to) an embedding $\bbQ_q(\zeta_7) \hookrightarrow L_{\mathfrak{q}}$ by computing a $\mathfrak{q}$-adic approximation to $\zeta_7$.

\item
  We compute a $\bbZ/7\bbZ$-basis $\{\bar{P},\bar{Q}\}$ for $J(l_{\mathfrak{q}})[\ffp]$ such that $e_{J,7}(\bar{P}, \bar{Q}) = \zeta_7$ (via the \texttt{Magma} intrinsic \texttt{WeilPairing}). This uniquely determines a pair $x_J(P),x_J(Q) \in \mathcal{K}(L_{\mathfrak{q}})$ which reduce modulo $\mathfrak{q}$ to $x_J(\bar{P})$ and $x_J(\bar{Q})$ respectively.

\item
  Let $\phi \colon J[\ffp] \cong \mu_7 \times \bbZ/7\bbZ$ be the isomorphism given by $P \mapsto (\zeta_7, 0)$ and $Q \mapsto (1,1)$. We explicitly determine the 1-cocycle
  \begin{equation*}
    \xi \colon \Gal(L/\bbQ) \cong G \to \GL_3(\bbQ(\zeta_7))
  \end{equation*}
  given by $\sigma \mapsto \rho (\sigma(\phi)\phi^{-1})$ where $\rho$ is the representation given in \Cref{sec:twist-princ-X7}.
  
\item
  We compute (a $\mathfrak{q}$-adic approximation to) a matrix $A \in \GL_3(L) \subset \GL_3(L_{\mathfrak{q}})$ which realises $\xi$ as a coboundary, using \Cref{algo:hilb-90}.

\item
  We twist $X(7)$ by $A$ and $A^{-T}$ to obtain curves $\mathcal{X}^{\pm} \subset \bbP^2_{L_\mathfrak{q}}$. By \Cref{lemma:XM7-explicit} these twists are $\mathfrak{q}$-adic approximations to the twists $X_{J[\ffp]}^{\pm}(7) \subset \bbP^2_{\bbQ}$. After normalising each equation so that the first non-zero coefficient is equal to $1$, the curves obtained therefore have coefficients in $\bbQ$ (up to a numerical error).

\item
  We recognise the coefficients of the twists $\mathcal{X}^\pm$ as rational numbers using the LLL algorithm.

\item
  We minimise and reduce the models for $\mathcal{X}^\pm$ using the algorithm of Elsenhans--Stoll~\cite{ES_MOH}, which is implemented in \texttt{Magma} as \texttt{MinRedTernaryForm}.
\end{enumerate}

\begin{remark}
  Computing the matrix $A \in \GL_3(L_{\mathfrak{q}})$ must be done with some care in order to control coefficient explosion (and to minimise the $\mathfrak{q}$-adic precision we must carry throughout the calculation). In our implementation we assume that $J[\ffp]$ is an irreducible $G_{\bbQ}$-module and (heavily) rely on the following observation:

  Let $\alpha$ be a root of $g_1(t)$ (so that $L$ is the splitting field of $\bbQ(\alpha)$). Since the Galois module $J[\ffp]$ is irreducible, there exists a subfield $\bbQ \subset K \subset \bbQ(\alpha)$ (which is unique up to conjugacy) such that $\mathcal{K}[\ffp]$ contains a $\Gal(L/ K)$-stable ``cyclic subgroup'' (or more precisely, the image of a cyclic subgroup of $J[\ffp]$). From the properties of the Weil pairing we have $L =  \widetilde{K}(\zeta_7)$, where $\widetilde{K}$ is the Galois closure of $K$ in $L$. We first compute an LLL-reduced basis $\{1, k_1,..., k_7\}$ for $K/\bbQ$. The elements $k \zeta_7^i$ span $L/\bbQ$ where $0 \leq i \leq 6$ and $k$ ranges over the $\Gal(L/\bbQ)$-conjugates of $k_j$ for each $1 \leq j \leq 7$. We then apply \Cref{algo:hilb-90} noting \Cref{rmk:good-matrix-c}. 
\end{remark}

\subsection{Computing the polynomials \texorpdfstring{$g_i(t)$}{g\_i(t)}}
\label{sec:an-expl-descr}
It remains to describe how the polynomials $g_1(t)$, $g_2(t)$, and $g_3(t)$ which cut out $\mathcal{K}[\ffp]$ may be computed. Let $C/\bbQ$ be a genus $2$ curve given by a Weierstrass equation $C : y^2 = f(x)$ whose Jacobian $J/\bbQ$ has RM by $\mathcal{O}_D$, and such that $7$ splits in $\mathcal{O}_D$.

Our approach follows that of Fisher~\cite[Theorem~6.3]{F_VEOO7ITTSGOAEC}. Using the analytic Jacobian machinery in \texttt{Magma} (in particular the functions \texttt{AnalyticJacobian} and \texttt{EndomorphismRing}) we compute complex approximations to a $\ffp$-torsion divisor $\mathfrak{D} = (x_1, y_1) + (x_2, y_2) - (\infty^+ + \infty^-) \in J(\Qbar)$.

The model for the Kummer surface $\mathcal{K}$ of $J$ given in \cite[Chapter~3]{CF_PTAMAOCOG2} and maps $\mathfrak{D}$ to the point $(1 : x_1 + x_2 : x_1 x_2 : \beta_0) \in \mathcal{K}$ where $\beta_0 \in \bbQ(x_1,x_2,y_1,y_2)$ is the rational function in \cite[(3.1.4)]{CF_PTAMAOCOG2}. Using the LLL algorithm we compute polynomials $h_1(t)$, $h_2(t)$, and $h_3(t) \in \bbQ(t)$ which approximate the minimal polynomials of $x_1 + x_2$, $x_1 x_2$, and $\beta_0$ (in particular we utilise the \texttt{Magma} function \texttt{MinimalPolynomial}). Using the description of the multiplication-by-$n$-map on $\mathcal{K}$ given in \cite[Chapter~3]{CF_PTAMAOCOG2} it is simple to verify (unconditionally) that the polynomials $h_i(t)$ cut out a $7$-torsion point in $\mathcal{K}(\Qbar)$. Polynomials $g_i(t)$ which satisfy \eqref{eqn:fi-set} are then the product over the distinct polynomials $h_i(t)$ occurring for such divisors $\mathfrak{D}$.

When $D = 8$ we also have the following approach which avoids the numerical instability issues which can occur when using \texttt{AnalyticJacobian}.

\subsubsection{A numerically stable approach when $D = 8$}
\label{sec:when-d-8}
Fix an isomorphism $\mathcal{O}_8 \cong \bbZ[\sqrt{2}]$. The prime number $7$ is a norm from $\bbZ[\sqrt{2}]$ and we may write $(7) = (3 + \sqrt{2})(3 - \sqrt{2})$. Let $[\sqrt{2}] \colon J \to J$ denote the multiplication-by-$\sqrt{2}$-map on $J$. The morphism $[\sqrt{2}]$ is a Richelot isogeny and using the approach in \cite[Section~5.7]{N_DMATOJOHGC} (which is implemented in \cite{N_Electronic}) we determine explicit polynomials giving the morphism $[\sqrt{2}] \colon \mathcal{K} \to \mathcal{K}$ induced by the action of $\sqrt{2}$ on $J$.

\begin{remark}
  By interpolation it is not difficult to give an explicit morphism $\mathcal{K} \to \mathcal{K}$ realising the $\sqrt{2}$-action on the Jacobian of the generic member of the generic family of genus $2$ curves $\mathcal{C}/\bbQ(A,P,Q)$ provided by Bending~\cite{B_COG2W2M_thesis,B_COG2W2M}. We record explicit equations for this (generic) morphism in \cite{ME_ELECTRONIC_here}.
\end{remark}

Formulae for the multiplication-by-$3$-map $[3] \colon \mathcal{K} \to \mathcal{K}$ are given in \cite[Section~3.5]{CF_PTAMAOCOG2}. Note that $\mathcal{K}[3 + \sqrt{2}] \cup \mathcal{K}[3 - \sqrt{2}]$ is exactly the set $\{P \in \mathcal{K} : 3P = \sqrt{2} P \}$. By taking successive resultants (and fixing a choice of sign so that $\ffp = (3 \pm \sqrt{2})$) it is simple to compute polynomials $g_1(t), g_2(t), g_3(t) \in \bbQ[t]$ satisfying \eqref{eqn:fi-set}.

\subsection{Outputs of the main algorithm}
\label{sec:outp-main-algor}
We provide a \texttt{Magma} implementation of the algorithm described in \Cref{sec:algo-twist-X7}. The main non-trivial input in the algorithm is a genus $2$ curve $C/\bbQ$ with RM by an order $\mathcal{O}_D$ in which $7$ splits. The fundamental discriminants $D < 100$ for which this occurs are $D = 8$, $29$, $37$, $44$, $53$, $57$, $60$, $65$, $85$, $88$, $92$, and $93$.

A generic family of genus $2$ curves $C/\bbQ$ whose Jacobians have RM by $\mathcal{O}_8$ are given by Bending \cite{B_COG2W2M_thesis,B_COG2W2M}, who also records many examples of small conductor in \cite[Appendix~A]{B_COG2W2M_thesis}. Bending's family is given by a triple of parameters $A, P, Q \in \bbQ$. It is simple to search for further examples of small conductor (noting from \cite[Section~6.3]{B_COG2W2M_thesis} that it is often useful to specialise at $P \in \{\pm 1, \pm 1/2, \pm 1/3, \pm 1/5\}$). Combining these with the examples found in the LMFDB~\cite{lmfdb} we obtain a small (non-exhaustive) database of curves with RM by $\bbZ[\sqrt{2}]$ and whose Jacobians have conductor $\sqrt{N_J} \leq \condbound$ (these may be found in \cite{ME_ELECTRONIC_here}).

Similar generic families are provided for each $D = 8$, $29$, $37$, $44$, and $53$ in \cite{CFM_FMFG2CWRM} building on work of Elkies--Kumar~\cite{EK_K3SAEFHMS}, who compute the moduli of such curves for all fundamental discriminants $D < 100$. Some examples of curves with RM by $\mathcal{O}_D$ and with small conductor are recorded in \cite{EK_K3SAEFHMS}. We record a (non-exhaustive) list of such curves with $\sqrt{N_J} \leq \condbound$ in \cite{ME_ELECTRONIC_here}. Note that when $D > 17$ the moduli space of curves with RM by $\mathcal{O}_D$ is not rational, so examples are sparser than when $D = 8$.

We run the algorithm in \Cref{sec:algo-twist-X7} for each curve recorded in \cite{ME_ELECTRONIC_here}.

\begin{remark}
  It would be interesting to compute the twists $X_{\mathcal{J}[\ffp]}^{\pm}(7)$ for the Jacobian $\mathcal{J}$ of the generic curves $\mathcal{C}/\bbQ(a,b,c)$ with RM by $\mathcal{O}_D$ given in \cite{B_COG2W2M_thesis,B_COG2W2M} and \cite{CFM_FMFG2CWRM} for each $D = 8$, $29$, $37$, $44$, and $53$ (i.e., those $D$ where $7$ splits in $\mathcal{O}_D$ and for which \cite{CFM_FMFG2CWRM} gives a generic model for a curve by $\mathcal{O}_D$). Unfortunately the algorithm we describe is ill suited to this task. One might hope to interpolate over twists computed for a large number of specialisations. However these twists are only defined up to the action of $\Aut_{\bbQ}(\bbP^2) \cong \PGL_3(\bbQ)$ and our algorithm for generating matrices which satisfy Hilbert's Theorem~90 does not do so in a compatible way (it requires a choice of $\ffp | 7$, a choice of basis for $J[\ffp]$, and a ``randomly'' generated matrix).
\end{remark}

\section{Proving twists of \texorpdfstring{$X(7)$}{X(7)} are isomorphic to \texorpdfstring{$X_M^\pm(7)$}{X\_M(7)}}
\label{sec:proving-twists}
Let $M/\bbQ$ be an irreducible $G_{\bbQ}$-module and let $\mathcal{X}/\bbQ$ be a plane quartic curve (in our case we will take $M = J[\ffp]$ and $\mathcal{X}$ to be an output of the algorithm in \Cref{sec:algo-twist-X7}). We now outline an approach for proving that $\mathcal{X}$ is isomorphic to a twist $X_M^\pm(7)$ of the Klein quartic (for some choice of sign). We assume that $\mathcal{X}$ is a twist of $X(7)$ (note that this is simple to check by computing Dixmier--Ohno invariants~\cite{D_OTPIOQPC,O_TGROIOTQI,E_ECOIOPQC} in \texttt{Magma}).

For the purpose of proving \Cref{coro:lots-of-sha} it suffices to consider only the case when $\mathcal{X}$ has a rational point (i.e., it suffices to recall \cite[Lemma~6.2]{F_VEOO7ITTSGOAEC}, see \Cref{lemma:qt-isom} below). In \Cref{sec:when-X-has-no-pts} we note how one may prove that a twist is isomorphic to $X_M^\pm(7)$ more generally.

\subsection{When $\mathcal{X}$ has a rational point}
\label{sec:when-X-has-pt}
Let $K$ be a field of characteristic zero. Suppose that $\mathcal{X}$ has a $K$-rational point which corresponds (through the moduli interpretation in \Cref{lemma:calX-moduli}) to an elliptic curve $E/K$ (defined up to quadratic twist) with $j$-invariant $j(E) \neq 0, 1728, \infty$. In this case, the following lemma reduces the problem of showing that $\mathcal{X}$ is isomorphic to $X_M^{\pm}(7)$ to the problem of showing that $X_E(7)$ is isomorphic to $X_M^{\pm}(7)$. 

\begin{lemma}
  Let $\mathcal{X}/K$ be a twist of $X(7)$ and suppose that there exists a point $P \in \mathcal{X}(K)$ with $j(P) \neq 0,1728,\infty$. If $E/K$ is an elliptic curve with $j(E) = j(P)$ then $\mathcal{X}$ is isomorphic to $X_E(7)$ over $K$.
\end{lemma}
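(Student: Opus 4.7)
The plan is to use the $K$-rational point $P$ to identify $\mathcal{X}$ with $X_{E'}(7)$ for an elliptic curve $E'/K$ satisfying $j(E') = j(P)$, and then to show that $X_{E'}(7) \cong X_E(7)$ by exploiting the fact that $E$ and $E'$ differ only by a quadratic twist once $j \neq 0, 1728$.

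First I would invoke the twisting principle: since $\mathcal{X}$ is a twist of $X(7)$, one may write $\mathcal{X} \cong X_M^r(7)$ for some symplectic $G_K$-module $(M, e_M)$ and some $r$ that is well-defined modulo squares in $(\bbZ/7\bbZ)^\times$. Applying \Cref{lemma:XMr-moduli} to the $K$-point $P$ yields a pair $(E', \phi)$ consisting of an elliptic curve $E'/K$ with $j(E') = j(P)$ and a $G_K$-equivariant isomorphism $\phi \colon M \cong E'[7]$ satisfying $e_{E',7}(\phi(x),\phi(y)) = e_M(x,y)^r$. Using $\phi$ to transport the symplectic structure of $M$ to $E'[7]$, the twists $X_M^r(7)$ and $X_{E'[7]}^{+}(7) = X_{E'}(7)$ are defined by the same class in $H^1(G_K, \UpsN{7}/\{\pm I\})$, and hence $\mathcal{X} \cong X_{E'}(7)$ over $K$.

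It then suffices to prove that $X_{E'}(7) \cong X_E(7)$. Since $j(E') = j(E) \notin \{0, 1728, \infty\}$, the curves $E'$ and $E$ are related by a quadratic twist, say $E' \cong E^d$ for some $d \in K^\times$. Let $\iota \colon E^d[7] \to E[7]$ be the isomorphism induced by the twist isomorphism $E^d \cong E$ over $K(\sqrt{d})$; it is symplectic because quadratic twisting preserves the Weil pairing. A direct computation gives the cocycle $\sigma \mapsto \iota^{-1}\sigma(\iota) = \chi_d(\sigma) \cdot I \in \UpsN{7}$, where $\chi_d$ is the quadratic character of $K(\sqrt{d})/K$. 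Since this cocycle takes values in $\{\pm I\}$ and the action of $\UpsN{7}$ on $X(7)$ factors through $\UpsN{7}/\{\pm I\}$, it becomes trivial in $H^1(G_K, \UpsN{7}/\{\pm I\})$, so $X_{E^d}(7) \cong X_E(7)$. Combining the two steps finishes the proof.

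There is no real obstacle here; the argument is essentially bookkeeping with the twisting principle and \Cref{lemma:XMr-moduli}. The one point that requires attention is the exclusion of $j \in \{0, 1728\}$: this hypothesis is precisely what ensures that $E$ and $E'$ differ only by a \emph{quadratic} (rather than quartic or sextic) twist, so that the resulting cocycle lies in $\{\pm I\}$ and dies modulo $\{\pm I\}$. A secondary routine check is that the identification in the first step really lands in $X_E(7) = X_E^+(7)$ as opposed to $X_E^-(7)$, which follows because transporting $(M, e_M^r)$ along $\phi$ identifies $r$ with $1$ relative to the reference module $E'[7]$.
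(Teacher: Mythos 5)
Your overall strategy is different from the paper's, but it contains a real gap in the very first step. You assert that ``since $\mathcal{X}$ is a twist of $X(7)$, one may write $\mathcal{X} \cong X_M^r(7)$ for some symplectic $G_K$-module $(M, e_M)$.'' This is not automatic. The construction $M \mapsto X_M^r(7)$ realizes precisely the classes in the image of $H^1(G_K, \UpsN{7}) \to H^1(G_K, \UpsN{7}/\{\pm I\}) = H^1(G_K, \Aut(X(7)_{\Kbar}))$, and this map need not be surjective: there is a Brauer-type obstruction in $H^2(G_K, \{\pm 1\})$ to lifting a general cohomology class. So ``every twist of $X(7)$ has the form $X_M^r(7)$'' is false in general. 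The hypothesis that $\mathcal{X}(K)$ contains a point with $j \neq 0, 1728, \infty$ does in fact kill this obstruction, but that is precisely the content one has to prove here, and your sketch uses it without justification. In effect, the hard part of the lemma is assumed at the outset, and the subsequent steps — applying \Cref{lemma:XMr-moduli} to extract $(E', \phi)$, and then comparing $X_{E'}(7)$ with $X_E(7)$ via a quadratic-twist cocycle valued in $\{\pm I\}$ — are correct but only rearrange the bookkeeping after the main point has been conceded.

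The paper's proof avoids this entirely with a short Galois descent argument that never needs to know $\mathcal{X}$ has a moduli interpretation a priori. One picks any $\Kbar$-isomorphism $\varphi \colon \mathcal{X} \cong X_E(7)$ (these exist because both are twists of $X(7)$), adjusts by a $\Kbar$-automorphism so that $\varphi(P)$ is the tautological $K$-rational point $Q = (E, \mathrm{id})$, and then observes that for each $\sigma \in G_K$ the automorphism $\varphi^{-1}\sigma\varphi\sigma^{-1}$ of $\mathcal{X}_{\Kbar}$ fixes $P$. Because $\PSL_2(\bbF_7) = \Aut(X(7)_{\Kbar})$ acts simply transitively on the fibre of the $j$-map over a value $\neq 0, 1728, \infty$, the only automorphism fixing $P$ is the identity; hence $\varphi$ is $G_K$-equivariant and defined over $K$. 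If you want to rescue your approach, you would need to first supply exactly this kind of fixed-point/descent argument to establish that $\mathcal{X} \cong X_M^r(7)$, at which point you have already done the paper's work and the quadratic-twist step becomes redundant.
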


\begin{proof}
  Let $\varphi \colon \mathcal{X} \cong X_E(7)$ be a $\Kbar$-isomorphism. By composing with a $\Kbar$-automorphism of $X_E(7)$ we may assume that $\varphi(P)$ is equal to the tautological point $Q = (E, \mathrm{id}) \in X_E(7)(K)$. Since $P$ and $Q$ are $K$-rational, for each $\sigma \in G_K$ we have $\sigma \varphi \sigma^{-1}(P) = Q$, so that $\varphi^{-1} \sigma \varphi \sigma^{-1} (P) = P$ for each $\sigma \in G_K$. Since $j(P) \neq 0, 1728, \infty$ the only $\Kbar$-automorphism of $\mathcal{X}$ which fixes $P$ is the identity. In particular, $\varphi = \sigma \varphi \sigma^{-1}$ for all $\sigma \in G_K$ and therefore $\varphi$ is defined over $K$.
\end{proof}

To show that $X_E(7)$ is isomorphic to $X_M^\pm(7)$ for some choice of sign, it suffices to show that $E[7]$ is isomorphic to $M$ as a $G_\bbQ$-module, up to quadratic twist. We recall the following lemma of Fisher (based on an argument of Serre using Goursat's lemma~\cite[Lemme~8]{S_PGDPDFDOCE}) which allows us to prove such congruences, up to quadratic twists. 

\begin{lemma}
  \label{lemma:qt-isom}
  Let $K$ be a number field and let $M$ be a $G_{K}$-module which is isomorphic as an abstract group to $(\bbZ/p\bbZ)^2$ for some $p \geq 5$. Suppose that $M$ comes equipped with a ($G_K$-equivariant) alternating pairing $M \times M \to \mu_p$. Let $E/K$ be an elliptic curve with surjective mod $p$ Galois representation, let $x_M \colon M \to M/\{\pm 1\}$, and let $x_E \colon E \to \bbP^1$ be the quotient by $\{\pm 1\}$. If there exist non-identity elements $P \in M$ and $Q \in E[p]$ such that $K(x_M(P)) = K(x_E(Q))$ then there exists a quadratic twist $E^d$ of $E$ such that $M \cong E^d[p]$.  
\end{lemma}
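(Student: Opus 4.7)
The plan is to follow the Goursat-lemma approach of Serre~\cite{S_PGDPDFDOCE} alluded to in the statement. First I would fix symplectic bases for $M$ (via $e_M$) and $E[p]$ (via the Weil pairing), so that $\rho_M, \rho_E \colon G_K \to \GL_2(\bbF_p)$ both have determinant equal to the mod-$p$ cyclotomic character $\chi_{\mathrm{cyc}}$. The desired conclusion $M \cong E^d[p]$ for some quadratic twist $E^d$ is equivalent to showing $\rho_M \cong \chi \otimes \rho_E$ for some quadratic character $\chi \colon G_K \to \{\pm 1\}$, equivalently to showing that the reductions $\rho_M^\pm, \rho_E^\pm \colon G_K \to \bar G := \GL_2(\bbF_p)/\{\pm I\}$ are conjugate in $\bar G$.

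Next I would apply Goursat's lemma to the image $H \subseteq \GL_2(\bbF_p)^2$ of $(\rho_M, \rho_E)$. Writing $L_M = K(M)$ and $L_E = K(E[p])$, the surjectivity of $\rho_E$ makes the second projection of $H$ surjective, so Goursat identifies $H$ as a fibre product governed by a normal subgroup $N_E \trianglelefteq \GL_2(\bbF_p)$ with $\GL_2(\bbF_p)/N_E \cong \Gal(L_M \cap L_E/K)$. The hypothesis $F := K(x_M(P)) = K(x_E(Q))$ places $F$ inside $L_M \cap L_E$; since $[F:K] = [\GL_2(\bbF_p) : \mathrm{Stab}(\pm Q)] = (p^2-1)/2$, we obtain $|N_E| \leq 2p(p-1)$. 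For $p \geq 5$, simplicity of $\PSL_2(\bbF_p)$ ensures every normal subgroup of $\GL_2(\bbF_p)$ either lies in the centre $Z = \bbF_p^\times I$ or contains $\SL_2(\bbF_p)$ (of order $p(p^2-1) > 2p(p-1)$); combining with $N_E \subseteq \mathrm{Stab}(\pm Q)$ gives $N_E \subseteq \{\pm I\}$. A symmetric argument using $F \subseteq L_M$ produces $N_M \subseteq \{\pm I\}$, and the surjectivity of $\det \rho_M = \chi_{\mathrm{cyc}}$ (which excludes the unique index-$2$ subgroup $\{g : \det g \in (\bbF_p^\times)^2\}$) forces $H_M = \GL_2(\bbF_p)$. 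Reducing modulo $\{\pm I\}$, $\bar H$ becomes the graph of an automorphism $\bar\phi$ of $\bar G$ with $\rho_M^\pm = \bar\phi \circ \rho_E^\pm$.

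The main obstacle is to show that $\bar\phi$ is inner. For $p \geq 5$, $\Aut(\PGL_2(\bbF_p))$ consists only of inner automorphisms; the only other possible automorphism of $\bar G$ is a ``central twist'' that acts trivially on $\PGL_2(\bbF_p)$ but non-trivially on $Z/\{\pm I\}$, multiplying elements of non-square determinant by a fourth root of unity $\alpha \in \bbF_p$ (which only arises when $p \equiv 1 \pmod 4$). Such a twist alters the determinant character by a factor of $\alpha^2 = -1$ on the non-square-determinant coset, which is incompatible with $\det \rho_M = \det \rho_E = \chi_{\mathrm{cyc}}$ (using that $\chi_{\mathrm{cyc}}$ is surjective and so hits non-squares). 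Hence $\bar\phi$ is inner, realised by conjugation with some $\bar g_0 \in \bar G$; lifting to $g_0 \in \GL_2(\bbF_p)$ produces a relation $\rho_M(\sigma) = \chi(\sigma) g_0^{-1} \rho_E(\sigma) g_0$ for a function $\chi \colon G_K \to \{\pm 1\}$, and since both sides are homomorphisms, $\chi$ is a quadratic character. The quadratic twist $E^d$ of $E$ by $\chi$ then satisfies $E^d[p] \cong M$ as $G_K$-modules.
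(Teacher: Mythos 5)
The paper's own proof is a one-line citation to \cite[Lemma~6.2]{F_VEOO7ITTSGOAEC}, together with the observation that the hypothesis $K=\bbQ$ imposed there is not used. You have instead reconstructed the underlying Goursat argument (attributed to Serre and reproduced by Fisher); this is a legitimate and more self-contained route, and it arrives at the correct conclusion. The trade-off is simply that the paper buys brevity by outsourcing, while your version makes the mechanism visible.

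Two points in your write-up should be tightened. First, the logical order around $H_M := \operatorname{im}\rho_M$ and $N_M$ is inverted: the ``symmetric argument'' giving $N_M \subseteq \{\pm I\}$ invokes the classification of normal subgroups of $\GL_2(\bbF_p)$, which you can only apply once you know $H_M = \GL_2(\bbF_p)$. So you should first deduce $H_M = \GL_2(\bbF_p)$ (from $H_M/N_M \cong \GL_2(\bbF_p)/N_E$ and $|N_E|\leq 2$ one gets $[\GL_2(\bbF_p):H_M]\leq 2$, and the index-$2$ subgroup is excluded because $\det\rho_M = \chi_{\mathrm{cyc}}$ is surjective — which, note, itself requires the alternating pairing on $M$ to be non-degenerate, a hypothesis the lemma should strictly include and which holds in the application by \Cref{lemma:split}) and only then treat $N_M \trianglelefteq \GL_2(\bbF_p)$. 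Second, your enumeration of automorphisms of $\bar G = \GL_2(\bbF_p)/\{\pm I\}$ is informal, and the ``fourth root of unity when $p \equiv 1 \pmod 4$'' description of the central twists is not the complete picture; fortunately you do not need it. Cleaner: $\bar\phi$ preserves $\PSL_2(\bbF_p) = \ker\bar\det$, and since $\Aut(\PSL_2(\bbF_p)) \cong \PGL_2(\bbF_p)$ for $p\geq 5$ prime, after composing with a conjugation you may assume $\bar\phi$ restricts to the identity on $\PSL_2(\bbF_p)$; normality of $\PSL_2(\bbF_p)$ in $\bar G$ then forces $\bar\phi(g)g^{-1}$ to centralise $\PSL_2(\bbF_p)$, hence lie in $\bar Z = Z/\{\pm I\}$, and the constraint $\bar\det\circ\bar\phi=\bar\det$ kills this factor since $\bar\det$ is injective on $\bar Z$. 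This is precisely where the two pairings (on $M$ and on $E[p]$) enter in tandem, which is worth highlighting.
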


\begin{proof}
  This follows immediately from \cite[Lemma~6.2]{F_VEOO7ITTSGOAEC} (cf. \cite[Proposition~6.1]{F_VEOO7ITTSGOAEC}). Note that the hypothesis that $K = \bbQ$ in \cite[Lemma~6.2]{F_VEOO7ITTSGOAEC} is not used.
\end{proof}

\subsection{When $\mathcal{X}$ has no rational points}
\label{sec:when-X-has-no-pts}
We rely on the approach in \Cref{sec:when-X-has-pt} together with the following criterion. In practice when $X$ and $Y$ are twists of $X(7)$ defined over $\bbQ$ it is simple to find number fields for which the statement holds. In this case there exist infinitely many points on $X$ and $Y$ defined over quartic fields. One expects that if $X$ and $Y$ are isomorphic (and have no non-trivial automorphisms defined over $\bbQ$), then for a generic such field the conditions of the lemma hold.

\begin{lemma}
  \label{lemma:X-twist-no-pts}
  Let $X/K$ and $Y/K$ be (geometrically integral) curves defined over a number field $K$. Suppose that there exist extensions $L_1,L_2/K$ for which $L_1 \cap L_2 = K$ and such that we have isomorphisms $\varphi_i \colon X_{L_i} \cong Y_{L_i}$ for each $i = 1, 2$. If $X$ (or $Y$) does not admit a non-trivial automorphism over the compositum $L_1 L_2$, then $X$ and $Y$ are isomorphic over $K$.
\end{lemma}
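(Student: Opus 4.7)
The plan is to show that the isomorphism $\varphi_1 \colon X_{L_1} \cong Y_{L_1}$ descends to $K$, using Galois descent together with the hypothesis on $L_1 \cap L_2$. The first step is to observe that $\varphi_1$ and $\varphi_2$ actually agree after base change to the compositum $L_1 L_2$. Indeed, the composition $\varphi_2^{-1} \circ \varphi_1$ defines an automorphism of $X_{L_1 L_2}$, which by the hypothesis on automorphisms over $L_1 L_2$ must be the identity. Consequently $\varphi_1 = \varphi_2$ as morphisms over any extension of $L_1 L_2$.

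Next I would translate the condition $L_1 \cap L_2 = K$ into a group-theoretic statement. Let $L/K$ be a finite Galois extension containing $L_1 L_2$ and set $G = \Gal(L/K)$ and $H_i = \Gal(L/L_i)$. By the Galois correspondence, the fixed field of the subgroup $\langle H_1, H_2 \rangle \le G$ is $L^{H_1} \cap L^{H_2} = L_1 \cap L_2 = K$, and therefore $\langle H_1, H_2 \rangle = G$. This identity is the key input; it is the reason we require $L_1 \cap L_2 = K$ rather than merely that $L_1$ and $L_2$ be distinct.

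Finally, I would view $\varphi_1$ as an $L$-isomorphism and check that it is $G$-equivariant. Since $\varphi_1$ is defined over $L_1$, it is fixed by the action of $H_1$. By the first step, $\varphi_1 = \varphi_2$ over $L$, and since $\varphi_2$ is defined over $L_2$ this common isomorphism is also fixed by $H_2$. Hence $\varphi_1$ is fixed by $\langle H_1, H_2 \rangle = G$, so it descends to a $K$-isomorphism $X \cong Y$ by Galois descent.

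The substantive content is the first step, which depends on the automorphism hypothesis. The main subtlety (and the only place where one must be careful) is that $L_1$ and $L_2$ are not assumed to be Galois over $K$, so one passes to a Galois closure and invokes the Galois correspondence for subfields; once this is done the argument is routine.
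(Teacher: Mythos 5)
Your proof is correct and follows the same strategy as the paper: first use the automorphism hypothesis to conclude $\varphi_1 = \varphi_2$ over $L_1L_2$, then observe that this common morphism, being defined over both $L_1$ and $L_2$, descends to $L_1 \cap L_2 = K$. The paper dispatches the second step in a single sentence, whereas you expand it into an explicit Galois-descent argument via a Galois closure and the identity $\langle H_1, H_2\rangle = G$; this is a welcome spelling-out of what the paper leaves implicit, but it is the same argument in substance.
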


\begin{proof}
  The assumption on the automorphism group of $X$ over $L_1 L_2$ implies that the composition $\varphi_1^{-1} \varphi_2$ is the identity, and therefore over $L_1L_2$ we have an equality $\varphi_1 = \varphi_2$. But then $\varphi_1$ is defined over $L_1 \cap L_2 = K$ and the claim follows.
\end{proof}

\begin{prop}
  \label{prop:XM-proof}
  Consider any of the data in \cite[\texttt{data/twists.m}]{ME_ELECTRONIC_here} which consists of
  \begin{enumerate}[label=\eniii]
  \item
    a genus $2$ curve $C/\bbQ$, 
  \item
    a fundamental discriminant $D > 0$ such that the Jacobian of $C$ has RM by $\mathcal{O}_D$, and
  \item
    a twist $\mathcal{X}/\bbQ$ of the Klein quartic.
  \end{enumerate}
  Then $\mathcal{X}$ is isomorphic over $\bbQ$ to $X_{J[\ffp]}^\pm(7)$ for some choice of sign and choice of prime $\ffp \subset \mathcal{O}_D$ above $7$.
\end{prop}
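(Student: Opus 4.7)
The proof proceeds by a case-by-case verification for each tuple $(C, D, \mathcal{X})$ recorded in the data file, combining \Cref{lemma:qt-isom} and \Cref{lemma:X-twist-no-pts} with the moduli description of \Cref{lemma:calX-moduli}. As a preliminary step, for each $\mathcal{X}$ we compute its Dixmier--Ohno invariants in \texttt{Magma} and confirm that $\mathcal{X}$ is indeed a geometric twist of the Klein quartic, reducing the task to pinning down which particular twist is realised.

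For those $\mathcal{X}$ admitting a rational point $P$ with $j(P) \notin \{0, 1728, \infty\}$, we apply the lemma in \Cref{sec:when-X-has-pt} to conclude that $\mathcal{X}$ is isomorphic over $\bbQ$ to $X_E(7)$, where $E/\bbQ$ is any elliptic curve whose $j$-invariant is $j(P)$ (computed via \Cref{lemma:calX-moduli}). To match $X_E(7)$ with $X_{J[\ffp]}^\pm(7)$, we produce a non-identity element $Q \in E[7]$ by factoring the $7$-division polynomial and record the field $\bbQ(x_E(Q))$. On the Jacobian side, the polynomials $g_i(t)$ constructed in \Cref{sec:an-expl-descr} cut out a non-identity point of $\mathcal{K}[\ffp]$ whose field of definition we likewise record. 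After confirming that these two number fields coincide (as subfields of $\overbar{\bbQ}$) and that the mod $7$ representation attached to $E$ is surjective (by inspecting a few Frobenius traces), \Cref{lemma:qt-isom} supplies a squarefree integer $d$ and an isomorphism $E^d[7] \cong J[\ffp]$ of $G_\bbQ$-modules, which identifies $\mathcal{X}$ with $X_{J[\ffp]}^\pm(7)$ for an appropriate sign.

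The remaining $\mathcal{X}$ carry no rational point of admissible $j$-invariant, in which case we invoke \Cref{lemma:X-twist-no-pts}. We search for points on $\mathcal{X}$ defined over number fields of small degree until we locate two such fields $L_1, L_2/\bbQ$ with $L_1 \cap L_2 = \bbQ$; over each $L_i$ the argument of the previous paragraph then produces an isomorphism $\mathcal{X}_{L_i} \cong X_{J[\ffp]}^\pm(7)_{L_i}$. To apply \Cref{lemma:X-twist-no-pts} it suffices to verify that neither curve admits a non-trivial automorphism over the compositum $L_1 L_2$, which follows from the well-known structure $\Aut_{\overbar{\bbQ}}(X(7)) \cong \PSL_2(\bbF_7)$ together with the fact that such automorphisms are realised only over fields containing $\bbQ(\zeta_7)$ and suitable further data. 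We conclude that $\mathcal{X} \cong X_{J[\ffp]}^\pm(7)$ over $\bbQ$.

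The principal obstacle is certifying the equality $\bbQ(x_E(Q)) = \bbQ(x_J(P))$ in the first case: these are typically number fields of degree up to $24$ cut out by explicit polynomials, and confirming that they coincide---rather than merely being abstractly isomorphic---requires either an explicit algebraic relation between the relevant generators or a prime-by-prime comparison of factorisation patterns at sufficiently many auxiliary primes. Once this is in hand, determining which sign in $X_M^\pm(7)$ is realised reduces to a Legendre-symbol calculation on the quadratic twist $d$ produced by \Cref{lemma:qt-isom}.
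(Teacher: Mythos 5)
Your overall structure tracks the paper's closely: the Dixmier--Ohno check, the rational-point case via the unnumbered lemma of \Cref{sec:when-X-has-pt} combined with \Cref{lemma:qt-isom}, and the no-rational-point case via \Cref{lemma:X-twist-no-pts} with points over two degree-$4$ fields. The one place where your argument has a genuine gap is the automorphism step in the no-rational-point case.

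You claim that the absence of non-trivial automorphisms of $\mathcal{X}$ over $L_1L_2$ ``follows from $\Aut_{\overbar{\bbQ}}(X(7)) \cong \PSL_2(\bbF_7)$ together with the fact that such automorphisms are realised only over fields containing $\bbQ(\zeta_7)$.'' This is not adequate: for a twist $X_{J[\ffp]}^r(7)$, individual automorphisms correspond to elements of $\Aut_r(J[\ffp])/\{\pm 1\}$ on which $G_\bbQ$ acts by conjugation through $\Gal(\bbQ(\mathcal{K}[\ffp])/\bbQ)$, and an individual automorphism is defined over the fixed field of its stabiliser --- which is governed by centralisers in $\PSL_2(\bbF_7)$ and need not contain $\bbQ(\zeta_7)$. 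The paper instead observes that the \emph{full} automorphism group becomes defined precisely over $\bbQ(\mathcal{K}[\ffp])$, then proves $\bbQ(\mathcal{K}[\ffp]) \cap L_1L_2 = \bbQ$ by using that $J[\ffp]$ is, over each $L_i$, a quadratic twist of $E_i[7]$ with $E_i/L_i$ having surjective mod~$7$ representation (checked via Serre's Proposition~19 at good places of $L_i$). This forces any automorphism fixed by $\Gal(\overbar{\bbQ}/L_1L_2)$ to be fixed by all of $G_\bbQ$, hence to lie in the centre of $\PSL_2(\bbZ/7\bbZ)$, which is trivial. Without this intersection argument your step does not close. Note also that you check surjectivity of the mod~$7$ representation only in the $\bbQ$-point case, but you need it for $E_i/L_i$ as well, both to apply \Cref{lemma:qt-isom} and to establish the intersection $\bbQ(\mathcal{K}[\ffp]) \cap L_1 L_2 = \bbQ$.

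One further remark: the ``principal obstacle'' you flag --- distinguishing abstract isomorphism of $\bbQ(x_E(Q))$ and $\bbQ(x_J(P))$ from equality inside $\overbar{\bbQ}$ --- is not a real obstruction. If the two fields are abstractly isomorphic, then they are $G_\bbQ$-conjugate inside $\overbar{\bbQ}$, and replacing $P$ by a Galois conjugate $\sigma(P)$ gives an honest equality $\bbQ(x_J(\sigma P)) = \bbQ(x_E(Q))$, which is all \Cref{lemma:qt-isom} requires.
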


\begin{proof}
  If $\mathcal{X}$ has a $\bbQ$-rational point of small height corresponding to an elliptic curve $E/\bbQ$, we apply \Cref{lemma:qt-isom}. In each case, applying \cite[Proposition~19]{S_PGDPDFDOCE} at several good primes (or using Zywina's algorithm~\cite{Z_EOIFECOQ}) suffices to show that the mod $7$ Galois representation attached to $E/\bbQ$ is surjective. In the electronic data we exhibit an explicit isomorphism between the fields $x_J(P)$ and $x_E(Q)$ for some $P \in J[\ffp]$ and $Q \in E[7]$ (note that a minimal polynomial for the extension $\bbQ(x_J(P))/\bbQ$ was computed in the course of the algorithm in \Cref{sec:an-expl-descr}).

  The general case proceeds similarly. Taking hyperplane sections of $\mathcal{X}$ we construct non-isomorphic quartic fields $L_1,L_2/\bbQ$ over which $\mathcal{X}$ obtains a point and such that $L_i$ contains no non-trivial subfield for each $i = 1,2$ (in particular $L_1 \cap L_2 = \bbQ$ and $L_1 \cap \bbQ(\zeta_7) = L_2 \cap \bbQ(\zeta_7) = \bbQ$). These points correspond to elliptic curves $E_1/L_1$ and $E_2/L_2$ whose mod $7$ Galois representations may be seen to be surjective by applying \cite[Proposition~19]{S_PGDPDFDOCE} at several places of good reduction. Applying \Cref{lemma:qt-isom} as above shows that $E_1$ and $E_2$ are $(7, \ffp)$-congruent to $J$, up to a quadratic twist. Since the mod $7$ Galois representations of $E_1/L_1$ and $E_2/L_2$ are surjective we have $\bbQ(\mathcal{K}[\ffp]) \cap L_1 L_2 = \bbQ$, where $\mathcal{K} = J/\{\pm 1\}$ is the Kummer surface of $J$. 

  It follows from the construction that $\Aut(X_{J[\ffp]}^{r}(7))$ is isomorphic (as a $G_\bbQ$-module) to the group $\Aut_r(J[\ffp])/\{\pm 1\}$ consisting of automorphisms of $J[\ffp]$ which are symplectic with respect to $(e_{J,7})^{r}$. Therefore, the field of definition of the automorphisms of $X_{J[\ffp]}^\pm(7)$ is equal to $\bbQ(\mathcal{K}[\ffp])$ and $X_{J[\ffp]}^\pm(7)$ admits no non-trivial automorphisms over $L_1L_2$ that are not defined over $\bbQ$. Suppose there is such an automorphism $\tau$ defined over $\bbQ$. Since the mod $7$ Galois representation attached to $J[\ffp]$ is surjective ($J[\ffp]$ is isomorphic over $L_i$ to a quadratic twist of $E_i[7]$ for each $i = 1,2$) the element $\tau$ is contained in the centre of $\Aut_r(J[\ffp])/\{ \pm 1\}$ which is isomorphic to $\PSL_2(\bbZ/7\bbZ)$ as an abstract group. Therefore $\tau$ is the identity and the claim follows from \Cref{lemma:X-twist-no-pts}.
\end{proof}

\section{Proving \texorpdfstring{$(7, \ffp)$}{(7, p)}-congruences and \texorpdfstring{\Cref{coro:lots-of-sha}}{Theorem 1}}
\label{sec:proving-congs}
We now prove \Cref{coro:lots-of-sha}. In order to apply visibility we must first show that the pairs $(C,E)$ in \Cref{tab:sha-exs} are in fact $(7, \ffp)$-congruent (not simply up to quadratic twist, as we proved in \Cref{sec:proving-twists}).

\begin{lemma}
  \label{lemma:where-qt}
  Let $E/\bbQ$ be an elliptic curve and let $J/\bbQ$ be a genus $2$ Jacobian with RM by $\mathcal{O}_D$. Suppose that $(p) = \ffp\bar{\ffp}$ in $\mathcal{O}_D$ and that there exists a squarefree integer $d \in \bbZ$ such that $E^d$ and $J$ are $(p, \ffp)$-congruent. Then $d$ is supported on the set of primes consisting of $p$, the bad primes of $E$, and the bad primes of $J$.
\end{lemma}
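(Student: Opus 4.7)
The plan is to argue by contradiction: let $\ell$ be a prime dividing $d$, suppose $\ell \neq p$ and that both $E$ and $J$ have good reduction at $\ell$, and derive a contradiction. Since the paper's applications all take $p = 7$, I will freely use that $p$ is odd (for $p=2$ quadratic twists do not change $E[2]$, so the statement becomes vacuous anyway).

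The first step is to translate the hypothesis into a statement about Galois modules. Standard descent identifies $E^d[p] \cong E[p] \otimes \chi_d$ as $G_\bbQ$-modules, where $\chi_d \colon G_\bbQ \to \{\pm 1\}$ is the quadratic character of $\bbQ(\sqrt{d})/\bbQ$. Combined with the assumed $(p,\ffp)$-congruence $E^d[p] \cong J[\ffp]$, this gives an isomorphism of $G_\bbQ$-modules $E[p] \otimes \chi_d \cong J[\ffp]$.

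The second step is to compare ramification at $\ell$. By the N\'eron--Ogg--Shafarevich criterion, since $\ell \neq p$ and $E$ and $J$ both have good reduction at $\ell$, the representations $E[p]$ and $J[p]$ are unramified at $\ell$; in particular so is the sub-$G_\bbQ$-module $J[\ffp] \subset J[p]$ provided by \Cref{lemma:split}. Let $I_\ell \subset G_\bbQ$ be an inertia subgroup at $\ell$. Then $I_\ell$ acts trivially on $E[p]$, so on $E[p] \otimes \chi_d$ it acts by the scalar $\chi_d|_{I_\ell}$; but it also acts trivially on $J[\ffp] \cong E[p] \otimes \chi_d$, and since $p$ is odd the scalars $\pm 1$ are distinct in $\bbF_p^\times$, forcing $\chi_d|_{I_\ell} = 1$. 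Thus $\chi_d$ is unramified at $\ell$.

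The final step is routine ramification bookkeeping for the conductor of $\chi_d$: for $\ell$ odd, unramifiedness of $\chi_d$ at $\ell$ is equivalent to $\ell \nmid d$ (using that $d$ is squarefree), contradicting $\ell \mid d$; for $\ell = 2$, unramifiedness forces $d \equiv 1 \pmod 4$, and in particular $2 \nmid d$, again contradicting $\ell \mid d$. Either way we contradict the assumption $\ell \mid d$, completing the proof. The only ``obstacle'' is a slight case split at $\ell = 2$, and the implicit hypothesis $p$ odd; both are benign in the setting at hand.
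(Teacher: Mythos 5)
Your proof is correct and follows essentially the same route as the paper's: at a good prime $\ell \neq p$ for both $E$ and $J$, apply the Serre--Tate/N\'eron--Ogg--Shafarevich criterion to conclude that $J[\ffp]$ (hence $E^d[p]$) is unramified at $\ell$, and then deduce that the twisting character is unramified at $\ell$. Where the paper cites \cite[\S2~Corollary~3]{ST_GROAV} and phrases things as equalities of fields over $\bbQ_\ell^{\mathrm{ur}}$, ending rather tersely with ``as required'', you instead track the quadratic character $\chi_d$ explicitly through $E^d[p] \cong E[p]\otimes\chi_d$ and spell out the remaining ramification bookkeeping (including the harmless case-split at $\ell=2$) --- a welcome bit of extra detail, but the same underlying argument.
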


\begin{proof}
  This is similar to \cite[Proposition~4.18]{FK_OTSTOIOTPTOEC} and \cite[Lemma~3.6]{F_O12COEC} (see also \cite[Lemme~8]{S_PGDPDFDOCE}). Let $\ell \neq p$ be a prime at which $J$ has good reduction and at which $E$ has potentially good reduction. Let $\bbQ_\ell^{\textrm{ur}}$ be the maximal unramified extension of $\bbQ_\ell$ and let $K = \bbQ_{\ell}^{\mathrm{ur}}(J[p])$. By \cite[\S2~Corollary~3]{ST_GROAV} if $A/\bbQ_{\ell}$ is an abelian variety with potential good reduction at $\ell$, then for each $p \neq \ell$ the field $\bbQ_\ell^{\mathrm{ur}}(A[p])$ is the smallest extension of $\bbQ_\ell^{\mathrm{ur}}$ over which $A$ attains good reduction. But then we have $\bbQ_{\ell}^{\mathrm{ur}}(E[p]) = \bbQ_{\ell}^{\mathrm{ur}}(J[\ffp]) \subset K = \bbQ_{\ell}^{\mathrm{ur}}$, as required.
\end{proof}

\begin{prop}
  \label{thm:sqrt2-congs}
  For each pair $(E, C)$ of elliptic curve $E/\bbQ$ and genus 2 curve $C/\bbQ$ in \Cref{tab:sha-exs} we have a $(7, \ffp)$-congruence between $E$ and $J = \Jac(C)$ for some choice of $\ffp \vert 7$ in $\mathcal{O}_D$.
\end{prop}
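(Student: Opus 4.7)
The plan is to combine \Cref{prop:XM-proof} with \Cref{lemma:qt-isom} and \Cref{lemma:where-qt}. For each pair $(E, C)$ in \Cref{tab:sha-exs}, set $J = \Jac(C)$. First I would invoke \Cref{prop:XM-proof} for the entry of \cite{ME_ELECTRONIC_here} corresponding to $(E, C)$: this asserts that the Klein-quartic twist $\mathcal{X}/\bbQ$ recorded there is $\bbQ$-isomorphic to $X_{J[\ffp]}^{r}(7)$ for some prime $\ffp$ above $7$ in $\mathcal{O}_D$ and some sign $r \in \{\pm\}$. By construction the listed $E$ arises from a $\bbQ$-rational point of $\mathcal{X}$ via the moduli interpretation of \Cref{lemma:calX-moduli}.

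Next I would apply \Cref{lemma:qt-isom}. Two inputs are required: surjectivity of the mod $7$ Galois representation of $E/\bbQ$, which I would verify via \cite[Proposition~19]{S_PGDPDFDOCE} applied at a handful of good primes (or via Zywina's algorithm \cite{Z_EOIFECOQ}); and an equality of fields $\bbQ(x_E(Q)) = \bbQ(x_J(P))$ for non-identity elements $Q \in E[7]$ and $P \in J[\ffp]$. The minimal polynomial of $x_J(P)$ over $\bbQ$ is one of the factors of the $g_i(t)$ computed in \Cref{sec:an-expl-descr}, and that of $x_E(Q)$ comes from the standard $7$-division polynomial of $E$. The lemma then yields a squarefree $d \in \bbZ$ together with a $G_\bbQ$-equivariant symplectic isomorphism $J[\ffp] \cong E^{d}[7]$.

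It remains to show that $d$ may be taken to be $1$ for the $E$ listed in the table. By \Cref{lemma:where-qt} the integer $d$ is supported on the finite set $\{7\} \cup \{\text{bad primes of } E\} \cup \{\text{bad primes of } J\}$, which for each row of \Cref{tab:sha-exs} gives a short, explicit list of candidate squarefree values. I would rule out every $d \neq 1$ by a Frobenius-trace comparison: for a prime $\ell$ of good reduction for both $E$ and $J$, a $(7,\ffp)$-congruence between $E^d$ and $J$ would force $a_\ell(E^d) \equiv a_\ell(J) \pmod{\ffp}$ in $\mathcal{O}_D$, where $a_\ell(J) \in \mathcal{O}_D$ denotes the $\ffp$-Hecke eigenvalue of $J$ at $\ell$. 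One mismatched prime eliminates a candidate, and in practice a small handful of $\ell$ suffices to isolate $d = 1$.

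The main obstacle is \Cref{prop:XM-proof} itself rather than anything downstream: once the algorithmically produced $\mathcal{X}$ is certified to be a twist of $X(7)$ parametrising $(7,\ffp)$-congruences with $J$, the pinning-down of the precise quadratic twist is a finite, essentially mechanical computation. The explicit verifications (field equalities, surjectivity checks, and Frobenius traces) for each row can be packaged in \cite{ME_ELECTRONIC_here}.
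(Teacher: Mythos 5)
Your proposal follows essentially the same route as the paper's proof: invoke \Cref{prop:XM-proof} to identify $\mathcal{X}$ with $X_{J[\ffp]}^\pm(7)$, deduce a congruence $J[\ffp]\cong E^d[7]$ for some squarefree $d$, bound $d$ by \Cref{lemma:where-qt}, and eliminate all $d\neq 1$ by Frobenius-trace comparisons. Two minor remarks on implementation: first, re-invoking \Cref{lemma:qt-isom} inside this proof is redundant, since \Cref{prop:XM-proof} already gives the $\bbQ$-isomorphism $\mathcal{X}\cong X_{J[\ffp]}^\pm(7)$, so a rational point on $\mathcal{X}$ produces, via \Cref{lemma:XMr-moduli}, an elliptic curve with $7$-torsion isomorphic to $J[\ffp]$ directly (and then $E$ is a quadratic twist of that curve because it has the same $j$-invariant); second, rather than comparing against a separately computed $a_\ell(J)\bmod\ffp$, the paper uses the equivalent characteristic-polynomial condition $a_\ell(E^d)^2-t_\ell a_\ell(E^d)+n_\ell\equiv 0\pmod 7$ from \cite[(5.2)]{F_VEOO7ITTSGOAEC}, where $t_\ell,n_\ell$ are read off from point counts $\#C(\bbF_\ell)$, $\#C(\bbF_{\ell^2})$, which sidesteps having to pin down the RM embedding or the residue of $a_\ell(J)$ modulo a specific prime above $7$. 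These are cosmetic differences; your argument is correct.
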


\begin{proof}
  Let $\ell \neq 7$ be a good prime for $C$ and $E$. By \cite[(5.2)]{F_VEOO7ITTSGOAEC} (which follows from \cite[Section~2.1]{FLSSSW_EEFTBSDCFMJOG2C} or \cite[Lemma~3]{MS_COG2WGRAF2WARWP}) a $(7,\ffp)$-congruence between $E^d/\bbQ$ and $J/\bbQ$ gives a congruence modulo $7$
  \begin{equation}
    \label{eq:trace-cond}
    a_{\ell}(E^d)^2 - t_\ell a_\ell(E^d) + n_\ell \equiv 0 \pmod{7}
  \end{equation}
  where $t_\ell = \ell + 1 - N_1 $ and $n_\ell = (N_1^2 + N_2 )/2 - (\ell + 1) N_1 - \ell$ where $N_1 = \# C(\mathbb{F}_\ell)$ and $N_2 = \# C(\mathbb{F}_{\ell^2})$.

  Testing \eqref{eq:trace-cond} on the divisors $d$ of the product of $7$ and the bad primes of $E$ and $C$ shows that $E^d$ and $J$ are not $(7,\ffp)$-congruent for any $d \neq 1$ (by \Cref{lemma:where-qt}). By \Cref{prop:XM-proof} $E$ and $J$ are $(7,\ffp)$-congruent up to quadratic twist since $E$ corresponds to a point on one of the twists $X_{J[\ffp]}^\pm(7)$. It therefore follows that $E$ and $J$ are $(7, \ffp)$-congruent for some choice of $\ffp$ dividing $7$.
\end{proof}

Using the congruences supplied by \Cref{thm:sqrt2-congs} we now prove \Cref{coro:lots-of-sha,thm:sha7-discs} by applying \cite[Theorem~2.2]{F_VEOO7ITTSGOAEC}.

\begin{proof}[Proof of \Cref{coro:lots-of-sha,thm:sha7-discs}]
  This follows from \cite[Theorem~2.2]{F_VEOO7ITTSGOAEC}, as we detail below.
  
  Let $E/\bbQ$ and $J/\bbQ$ be one of the pairs of elliptic curve and genus $2$ Jacobian from \Cref{coro:lots-of-sha} or \ref{thm:sha7-discs}. We check that in each case $J$ is geometrically simple by applying the condition in \cite[Section~14.4]{CF_PTAMAOCOG2} and \cite{S_TS2DAVDOQWMWGORAL19}. The $7$-torsion subgroups of $E(\bbQ)$ and $J(\bbQ)$ are trivial. The rank of $E/\bbQ$ is $2$ and the rank of $J/\bbQ$ is $0$ (the rank of $J/\bbQ$ is bounded using $2$-descent, which is implemented as \texttt{RankBounds} in \texttt{Magma}). For each discriminant $D$ appearing in \Cref{coro:lots-of-sha,thm:sha7-discs} the prime $7$ not only splits in $\mathcal{O}_D$, but $7 = \Nm \eta$ for some $\eta \in \mathcal{O}_D$. In particular the isogeny $\ffp$ is equal to the multiplication-by-$\eta$-map on $J$, and $J(\bbQ)/\ffp J(\bbQ) = 0$.

  By \Cref{thm:sqrt2-congs} the elliptic curve $E$ is $(7,\ffp)$-congruent to $J$. The abelian varieties $E$ and $J$ have good reduction at $7$, so by \cite[Theorem~2.2]{F_VEOO7ITTSGOAEC} it suffices to show that the Tamagawa numbers of $E/\bbQ$ and $J/\bbQ$ are coprime to $7$.
  
  We compute the Tamagawa numbers of $E/\bbQ$ using \texttt{Magma}. Except for the Jacobian of conductor $3200^2$ in \Cref{tab:sha-exs}, for each bad prime $p$ of $J$ one may check that the order of the geometric component group of $J$ at $p$ is coprime to $7$ using Liu's \texttt{genus2reduction} in \texttt{SageMath} and Donnelly's \texttt{Magma} functions \texttt{RegularModel} and \texttt{ComponentGroup}.

  For the Jacobian of conductor $3200^2$ in \Cref{tab:sha-exs} the computation of the Tamagawa number of $J/\bbQ$ at $2$ was carried out in the appendix to \cite{KS_CVOSBSDFMMASOQ} (where it is shown that the Tamagawa number is $1$). 
\end{proof}

\section{Evidence towards \texorpdfstring{\Cref{conj:sha-invisible}}{Conjecture 6}}
\label{sec:worthy-example}
Consider the genus $2$ curve $C/\bbQ$ with LMFDB label \LMFDBLabelGenusTwo{385641.a.385641.1} and Weierstrass equation
\begin{equation*}
  C : y^2 + (x^3 + 1)y = -6x^4 + 6x^3 + 27x^2 - 30x - 22.
\end{equation*}
The Jacobian $J/\bbQ$ of $C$ has RM by $\bbZ[\sqrt{2}]$. In \cite[A.3]{KS_CVOSBSDFMMASOQ} it is noted that the Birch and Swinnerton-Dyer conjecture predicts $| \Sha(J^{-11}/\bbQ) | = 7^2$. By \Cref{prop:XM-proof}, for some choice of factorisation $(7) = \ffp \bar{\ffp}$ in $\bbZ[\sqrt{2}]$ we have models
\begingroup
\allowdisplaybreaks
\begin{align*}
  X_{J[\ffp]}^{\pm}(7) :&\; -2 x_0^4 + 39 x_0^3 x_1 + 11 x_0^3 x_2 - 42 x_0^2 x_1^2 - 18 x_0^2 x_1 x_2 + 20 x_0 x_1^3 - 6 x_0 x_1^2 x_2 \\
              &+ 12 x_0 x_1 x_2^2 - 7 x_0 x_2^3 - 24 x_1^4 + 13 x_1^3 x_2 + 15 x_1^2 x_2^2 + 9 x_1 x_2^3 + x_2^4  = 0, \\
  X_{J[\ffp]}^{\mp}(7) :&\; 2 x_0^4 + 5 x_0^3 x_1 + 9 x_0^3 x_2 + 6 x_0^2 x_2^2 - x_0 x_1^3 - 6 x_0 x_1^2 x_2 + 12 x_0 x_1 x_2^2 + 2 x_0 x_2^3 \\
                              &- x_1^4 - 3 x_1^3 x_2 + 3 x_1^2 x_2^2 + 17 x_1 x_2^3 + 12 x_2^4 = 0, \\
  X_{J[\bar{\ffp}]}^{\pm}(7) :&\; x_0^4 - 3 x_0^3 x_1 - 28 x_0^3 x_2 - 15 x_0^2 x_1^2 - 3 x_0^2 x_1 x_2 + 39 x_0^2 x_2^2 - 6 x_0 x_1^3 - 12 x_0 x_1^2 x_2 \\
              & - 6 x_0 x_1 x_2^2 - 29 x_0 x_2^3 + 3 x_1^4 + 9 x_1^3 x_2 + 30 x_1^2 x_2^2 - 3 x_1 x_2^3 - 10 x_2^4 = 0, \\
  X_{J[\bar{\ffp}]}^{\mp}(7) :&\; -4 x_0^4 + 6 x_0^3 x_1 + 7 x_0^3 x_2 + 3 x_0^2 x_1^2 + 12 x_0 x_1^3 + 6 x_0 x_1^2 x_2 - 9 x_0 x_1 x_2^2 -  x_0 x_2^3  \\
              & - 6 x_1^4 - 3 x_1^3 x_2 + 3 x_1^2 x_2^2 + 6 x_1 x_2^3 + x_2^4 = 0.
\end{align*}
\endgroup
We were unable to find rational points on any of these curves, except on $X_{J[\ffp]}^{\mp}(7)$ where we find exactly one point which corresponds to the elliptic curve $E/\bbQ$ with LMFDB label \LMFDBLabel{1242.m1} and Weierstrass equation $y^2 + xy + y = x^3 - x^2 - 1666739x - 2448131309$. Using the argument in \Cref{thm:sqrt2-congs} it can be shown that $E$ and $J$ are $(7,\ffp)$-congruent. However, the quadratic twist of $E$ by $-11$ has trivial Mordell-Weil group, so cannot be used to visualise the (conjectural) non-trivial elements of $\Sha(J^{-11}/\bbQ)[7]$.

\newcommand{\etalchar}[1]{$^{#1}$}
\providecommand{\bysame}{\leavevmode\hbox to3em{\hrulefill}\thinspace}
\providecommand{\MR}{\relax\ifhmode\unskip\space\fi MR }
\providecommand{\MRhref}[2]{%
  \href{http://www.ams.org/mathscinet-getitem?mr=#1}{#2}
}
\providecommand{\bibtitleref}[2]{%
  \hypersetup{urlbordercolor=0.8 1 1}%
  \href{#1}{#2}%
  \hypersetup{urlbordercolor=cyan}%
}
\providecommand{\href}[2]{#2}

\end{document}